\documentclass[a4paper, 12pt, reqno]{amsart}


\usepackage[english]{babel}

\usepackage{amsfonts, amsmath, amssymb, amsthm}
\usepackage[shortlabels]{enumitem}
\setlist[enumerate]{label=\normalfont{(\arabic*)}}
\usepackage[colorlinks=true, linkcolor=blue, citecolor=blue]{hyperref}
\usepackage{stmaryrd}
\usepackage{tikz-cd}

\usepackage{scalerel}
\usepackage{xifthen}

\advance\hoffset-20mm
\advance\textwidth40mm


\newcommand{\bb}{\mathbb}
\newcommand{\cal}{\mathcal}

\newcommand{\Ga}{\bb{G}_\mathrm{a}}
\newcommand{\into}{\hookrightarrow}
\newcommand{\reg}{\mathrm{reg}}

\DeclareMathOperator{\AffCone}{AffCone}
\DeclareMathOperator{\Aut}{Aut}

\DeclareMathOperator{\grOp}{Gr}
\DeclareMathOperator{\Ker}{Ker}

\DeclareMathOperator{\Pic}{Pic}

\DeclareMathOperator{\SAut}{SAut}
\DeclareMathOperator{\SOop}{SO}
\DeclareMathOperator{\Spec}{Spec}
\DeclareMathOperator{\Supp}{Supp}

\newcommand{\gen}[1]{\left\langle #1 \right\rangle}
\newcommand{\Gr}[2]{\grOp \left( {#1}, {#2} \right)}
\newcommand{\insertText}[2][]{%
    \ifthenelse{\isempty{#1}}{%
        \quad \text{#2} \quad%
    }{%
        #1 \text{#2} #1%
    }%
}

\newcommand{\restrict}[2]{
    \left.
    \kern-\nulldelimiterspace{#1}
    \right|_{#2}
}
\newcommand{\SO}[1]{\SOop_{#1}(\mathbb{K})}
\newcommand{\wh}[1]{
    \hstretch{2}{\hat{\hstretch{.5}{#1}}}
}


\newtheorem{counter}{}[section]

\theoremstyle{definition}
\newtheorem{defi}[counter]{Definition}

\theoremstyle{plain}
\newtheorem{coro}[counter]{Corollary}
\newtheorem{lemm}[counter]{Lemma}
\newtheorem{prop}[counter]{Proposition}
\newtheorem{theo}[counter]{Theorem}

\theoremstyle{remark}
\newtheorem{rema}[counter]{Remark}


\begin{document}


\date{}
\title[Flexibility of affine cones]{Flexibility of affine cones over a smooth complete intersection of two quadrics}

\author[Kirill Shakhmatov]{Kirill Shakhmatov}
\address{HSE University, Faculty of Computer Science, Pokrovsky Boulevard 11, Moscow, 109028 Russia}
\email{kshahmatov@hse.ru}

\author[Hoang Le Truong]{Hoang Le Truong}
\address{Institute for Artificial Intelligence, University of Engineering and Technology, Vietnam National University, Hanoi, Viet Nam}
\email{hltruong@vnu.edu.vn\\truonghoangle@gmail.com}

\thanks{The article was prepared within the framework of the project ``International Academic Cooperation'' HSE~University}

\subjclass[2020]{Primary 14J50, 14L30, 14M10; \
Secondary 14J45, 14R20}
\keywords{Affine cone, cylinder, flexibility, quadric}


\begin{abstract}
We prove flexibility of two families of affine varieties: the complement in $\bb{P}^n$ of a projective quadric of rank at least three and affine cones over a smooth complete intersection of two quadrics in $\bb{P}^{n + 2}$, $n \ge 3$.
\end{abstract}

\maketitle


\section{Introduction}


The geometry of automorphism groups has long offered a revealing lens through which to view the structure of algebraic varieties. In the affine setting, varieties that admit an abundance of unipotent transformations exhibit particularly rich behaviour. Among them, the \emph{flexible varieties} form a distinguished class: they are those for which every tangent direction at a smooth point arises from an additive group action. Such varieties can be regarded as algebraically homogeneous in the strongest possible sense.

Throughout we work over an algebraically closed field \( \bb K \) of characteristic zero. A \emph{variety} means an integral separated scheme of finite type over \( \bb K \). We write \( \Ga = (\bb K, +) \) for the one-dimensional additive algebraic group. For a variety \( X \), a point \( x \in X \) is said to be \emph{flexible} if the tangent space \( T_xX \) is spanned by tangent vectors to orbits of effective \( \Ga \)-actions on \( X \); the variety \( X \) is \emph{flexible} if every smooth point is flexible. Denote by \( \operatorname{SAut}(X) \subseteq \operatorname{Aut}(X) \) the subgroup generated by all one-parameter unipotent subgroups of automorphisms of \( X \). A fundamental theorem of Arzhantsev, Flenner, Kaliman, Kutzschebauch, and Zaidenberg~\cite{AFKKZ13} shows that flexibility is equivalent to a strong homogeneity property of this group:  

\begin{theo}[{\cite[Theorem~0.1]{AFKKZ13}}] \label{FIT.th}
Let \( X \) be an affine variety. Then the following are equivalent:  
\begin{enumerate}
  \item \( X \) is flexible;  
  \item \( \SAut(X) \) acts transitively on the smooth locus \( X^\reg \);  
  \item if \( \dim X \ge 2 \), \( \SAut(X) \) acts infinitely transitively on \( X^\reg \).
\end{enumerate}
\end{theo}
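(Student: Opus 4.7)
The plan is to prove the cycle $(3) \Rightarrow (2) \Rightarrow (1) \Rightarrow (2) \Rightarrow (3)$, with the real substance concentrated in the last implication. The implication $(3) \Rightarrow (2)$ is immediate. For $(2) \Rightarrow (1)$, fix a smooth point $x$; since the orbit $\SAut(X) \cdot x$ equals $X^\reg$, some finite composition $\exp(t_n \delta_n) \circ \cdots \circ \exp(t_1 \delta_1)$ of one-parameter unipotent subgroups must have differential of full rank at $t = 0$, and this differential factors through the span of $\{\delta_1(x), \dots, \delta_n(x)\}$. Flexibility at $x$ follows.

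For $(1) \Rightarrow (2)$, I would show that every $\SAut(X)$-orbit of a smooth point is Zariski-open in $X^\reg$. Given flexibility at $x$, choose locally nilpotent derivations $\delta_1, \dots, \delta_n$ with $\delta_i(x)$ spanning $T_x X$ and consider the morphism $\mu \colon \Ga^n \to X$ defined by $(t_1, \dots, t_n) \mapsto \exp(t_n \delta_n) \circ \cdots \circ \exp(t_1 \delta_1)(x)$. Its differential at the origin is surjective onto $T_x X$, so the image of $\mu$ contains an open neighbourhood of $x$. Since $X^\reg$ is irreducible, two disjoint open orbits would contradict connectedness, so all smooth points lie in a single $\SAut(X)$-orbit.

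The main work is $(2) \Rightarrow (3)$. The decisive tool is the \emph{replica} construction: if $\delta$ is a locally nilpotent derivation of $\mathcal{O}(X)$ and $f \in \ker \delta$, then $f\delta$ is again locally nilpotent and integrates to a $\Ga$-action. Multiplying by invariants that vanish on a prescribed finite set allows one to modify an existing $\Ga$-action so that it fixes that set pointwise while still acting non-trivially on a given extra point. With this in hand, I would prove $m$-transitivity by induction: given distinct smooth tuples $(x_1, \dots, x_m)$ and $(y_1, \dots, y_m)$, apply the inductive hypothesis to send $(x_1, \dots, x_{m-1})$ to $(y_1, \dots, y_{m-1})$, and then use a suitably chosen replica of a locally nilpotent derivation — vanishing to high order at $y_1, \dots, y_{m-1}$ but not at the image of $x_m$ — to steer $x_m$ to $y_m$ without disturbing the other points.

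The principal obstacle lies in the last construction: one must guarantee that for any finite set $Y \subset X^\reg$ and any $z \in X^\reg \setminus Y$, the pointwise stabilizer of $Y$ in $\SAut(X)$ contains enough one-parameter unipotent subgroups to sweep through every tangent direction at $z$. The argument proceeds by reducing, via the already-established transitivity, to a local question near $z$, and then invoking the replica trick together with the abundance of invariant functions in $\ker \delta$ to produce sufficiently many locally nilpotent derivations vanishing on $Y$ but with prescribed nonzero values at $z$. Assembling these pointwise-stabilising replicas, together with the base case $m = 1$, completes the inductive step and yields infinite transitivity of $\SAut(X)$ on $X^\reg$.
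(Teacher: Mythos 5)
This theorem is not proved in the paper at all: it is quoted from Arzhantsev--Flenner--Kaliman--Kutzschebauch--Zaidenberg \cite{AFKKZ13}, so there is no in-paper argument to compare against, and what you are really sketching is the main theorem of that reference. Your outline does follow the broad architecture of the original proof: $(1)\Leftrightarrow(2)$ via openness of $\SAut(X)$-orbits at flexible points, and $(2)\Rightarrow(3)$ via replicas $f\delta$ with $f\in\Ker\delta$. Two smaller issues first. In $(2)\Rightarrow(1)$ the differential of $\exp(t_n\delta_n)\circ\cdots\circ\exp(t_1\delta_1)$ at $t=0$ lands in the span of the $\delta_i(x)$, but there is no reason for it to have full rank \emph{at the origin}; one gets surjectivity at a general parameter value from generic smoothness (characteristic zero) applied to a dominant composition map, and then transports the conclusion to $x$ by conjugating with a group element, which is where transitivity is actually used. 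Also, with the clause ``if $\dim X\ge 2$'' placed inside condition (3), that condition is vacuous for curves, so your step $(3)\Rightarrow(2)$ is not immediate unless you assume $\dim X\ge 2$ throughout, as \cite{AFKKZ13} does.

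The genuine gap is in $(2)\Rightarrow(3)$, which is the entire content of the theorem. The replica trick alone does not produce the elements of the pointwise stabilizer of $Y=\{y_1,\dots,y_{m-1}\}$ that you need: a function $f\in\Ker\delta$ is constant along every $\delta$-orbit, so if the point $z$ you want to move lies on the same $\delta$-orbit as some $y_i$, no replica $f\delta$ can fix $y_i$ while acting nontrivially at $z$. One must first move the configuration into general position relative to the chosen derivations, and, more seriously, one must show that for \emph{every} tangent direction at $z$ there is a locally nilpotent derivation (among the given generators and their conjugates) realizing that direction at $z$ while admitting an invariant that vanishes on $Y$ but not at $z$. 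This is exactly the technical heart of \cite{AFKKZ13} --- their saturation and orbit-separation machinery --- and it does not follow from an unspecified ``abundance of invariant functions'': the kernel of a single $\delta$ may fail to separate the relevant points, and derivations spanning $T_xX$ at one point need not span the tangent space at another. Your plan correctly identifies where the difficulty sits, but the decisive step is asserted rather than proved.
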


This theorem---widely regarded as a cornerstone in the development of the theory---places the notion of flexibility at the intersection of algebraic transformation groups and birational geometry. It identifies a distinguished class of affine varieties whose automorphism groups behave much like those of homogeneous spaces.

The idea of flexibility first appeared in the study of affine toric varieties, where the presence of additive group actions can be described combinatorially in terms of the defining fan \cite{AZK12}. Over time, the concept proved far more general and has become deeply intertwined with the geometry of \emph{Fano varieties} and their \emph{affine cones}. Given a projective variety \( X \) and a very ample divisor \( H \) on \( X \), one can form the affine cone $\AffCone_H(X)$ over the embedding $X \into \bb{P}^N$ defined by the complete linear system $|H|$. A natural question, raised a little over a decade ago, asks for which projective \emph{rational} varieties \( X \) this affine cone is flexible. This problem has gradually shaped the modern theory of flexible varieties, linking unipotent group actions with the geometry of Fano varieties.

The first decisive progress was made by Perepechko~\cite{Pe13}, who classified flexible affine cones over del Pezzo surfaces: they occur precisely for degrees $\ge 4$, while cones over surfaces of smaller degree fail to be flexible. This revealed, already in dimension two, a sharp boundary between flexible and rigid behaviour. In higher dimensions, Prokhorov and Zaidenberg~\cite{PZ23} established flexibility for the affine cones over Fano--Mukai fourfolds of genus~10, and later Hang and the second author~\cite{HT24} extended the result to fourfolds of genus~7 (cf. \cite{HT22, HHT22}). The work of Cheltsov, Park, Prokhorov, and Zaidenberg~\cite{CPPZ21, KPZ11, KPZ13} developed a geometric framework based on \emph{cylinders}---open subsets isomorphic to products with an affine space---and showed that a transversal covering by such cylinders often guarantees flexibility. Together, these results connected flexibility with core geometric structures of Fano varieties, including families of lines, birational projections, and degenerations of hyperplane sections.

Within this evolution, a natural and historically significant next step concerns the class of \emph{smooth complete intersections of two quadrics}. These varieties occupy a central place in algebraic geometry, appearing in the early classification of Fano manifolds and re-emerging in modern studies of birational rigidity, derived categories, and moduli (\cite{Re72, HT21, HT25}). For \( n \ge 3 \), such a variety  
\begin{equation*}
X = Q_1 \cap Q_2 \subseteq \bb{P}^{n + 2}
\end{equation*}
is a Fano variety of Picard number~1 and index~\( n-1 \); in the case \( n = 2 \), one recovers the del Pezzo surface of degree~4. Despite their classical nature, the behaviour of their affine cones under unipotent automorphisms remained open. The present paper closes this gap by proving that the affine cones over all such varieties are flexible, thereby extending the known frontier of flexibility to one of the most fundamental families in Fano geometry.

\begin{theo}[Main Theorem]
Let \( X \subseteq \bb{P}^{n + 2} \) be a smooth complete intersection of two quadrics with \( n \ge 3 \). Then the affine cone over \( X \) with respect to any very ample divisor is flexible.
\end{theo}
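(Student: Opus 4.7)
I would apply the cylinder covering criterion of Cheltsov--Park--Prokhorov--Zaidenberg \cite{CPPZ21}: if a smooth projective variety $X$ with very ample divisor $H$ admits a transversal covering by $H$-polar cylinders, then $\AffCone_H(X)$ is flexible. Because $X$ is Fano of Picard number one, $\Pic(X) = \bb{Z} \cdot H$ with $H$ the hyperplane class; every very ample divisor is a positive multiple of $H$, and since every nonzero effective divisor is linearly equivalent to some $mH$, the $H$-polarity of a cylinder $U$ reduces to the requirement that $X \setminus U$ be a proper closed subset. The proof thus reduces to constructing cylinders through every smooth point of $X$ whose $\bb{A}^1$-directions collectively span the tangent space at that point.

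The natural source of cylinders is the pencil $\{Q_{[\lambda:\mu]} = \lambda Q_1 + \mu Q_2\}$ of quadrics whose base locus is $X$. For $X$ smooth this pencil has $n+3$ simple singular members, each a cone with a single vertex $v \notin X$ over a smooth quadric $Q'_v \subseteq \bb{P}^{n+1}$. Linear projection from $v$ restricts to a regular double cover $\pi_v \colon X \to Q'_v$. For any $q \in Q'_v$, the further projection $Q'_v \dashrightarrow \bb{P}^n$ from $q$ identifies $Q'_v \setminus T_q Q'_v$ with $\bb{A}^n$ and contracts the lines on $Q'_v$ through $q$ to a smooth quadric $Q_{v,q}$ inside a hyperplane of $\bb{P}^n$. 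Composing, one obtains a degree-two rational map $\psi_{v,q} \colon X \dashrightarrow \bb{P}^n$ which, on appropriate open sets, realises a birational correspondence between an open subvariety of $X$ and an open subvariety of $\bb{P}^n \setminus Q_{v,q}$.

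This is where the paper's first main theorem---flexibility of $\bb{P}^n \setminus Q$ for $\rk Q \geq 3$---enters crucially. The abundant $\Ga$-actions on $\bb{P}^n \setminus Q_{v,q}$ lift through $\psi_{v,q}$ to $\Ga$-actions on $X$, each generic orbit contributing an $\bb{A}^1$-factor in a cylinder chart of $X$. As the vertex $v$ ranges over the $n+3$ singular members of the pencil and $q$ varies over $Q'_v$, this produces a large parameter family of cylinders, hence a rich family of $\bb{A}^1$-directions at each general point of $X$.

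The principal obstacle will be the transversality verification. At each smooth point $p \in X$ one must check that the tangent vectors to the $\bb{A}^1$-fibers of the cylinders through $p$ actually span $T_pX$, not merely a proper subspace. A generic dimension count handles points in general position, but several loci require separate treatment---the ramification divisors of the $\pi_v$, the preimages of the tangent hyperplanes $T_q Q'_v$, and distinguished subvarieties inherited from the pencil structure---by exhibiting, at every such exceptional point, an alternative choice $(v', q')$ for which the corresponding cylinder supplies a missing tangent direction. The availability of $n+3$ distinct singular quadrics in the pencil provides the room needed for this case analysis, and carrying it out systematically is the heart of the technical work.
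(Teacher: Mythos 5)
Your overall strategy---reduce to a covering of $X$ by open sets identified with complements of quadrics in $\bb{P}^n$, invoke the flexibility of $\bb{P}^n \setminus Q$ for $\rk Q \ge 3$, and glue via a transversality argument, with $H$-polarity coming for free from $\Pic(X) \cong \bb{Z}$---is exactly the architecture of the paper's proof. But the specific birational model you choose breaks the argument at its central step. Projection from the vertex $v$ of a singular member of the pencil is a finite morphism $\pi_v \colon X \to Q'_v$ of degree two (each ruling line of the cone through $v$ meets the other quadric of the pencil in two points), so the composite $\psi_{v,q} \colon X \dashrightarrow \bb{P}^n$ is generically $2{:}1$. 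It therefore does \emph{not} restrict to an isomorphism, or even a birational correspondence, between any dense open subset of $X$ and an open subset of $\bb{P}^n \setminus Q_{v,q}$. Consequently the claim that ``the abundant $\Ga$-actions on $\bb{P}^n \setminus Q_{v,q}$ lift through $\psi_{v,q}$'' is unsupported: a $\Ga$-action on the base of a ramified double cover lifts only when it preserves the branch divisor (and the associated covering data), and the cylinder actions on the complement of a quadric constructed in Section~\ref{CQPS.se} have no reason to preserve the branch locus of $\pi_v$. This is not a technical omission one can wave away in the ``transversality verification''; without it you produce no $\Ga$-actions on $X$ at all.

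The paper avoids this by projecting from a \emph{line} $l \subseteq X$ rather than from a point off $X$ (the Kishimoto--Prokhorov--Zaidenberg construction recalled from \cite[Lemma~3.2]{CPPZ21}): since a general plane through $l$ meets $X$ in $l$ plus a single residual point, the projection $\psi \colon X \dashrightarrow \bb{P}^n$ is birational and induces an honest isomorphism $X \setminus D_l(X) \cong \bb{P}^n \setminus \phi(E)$, where $D_l(X)$ is the divisor swept out by lines meeting $l$ and $\phi(E)$ is a quadric of rank $3$ or $4$. The remaining work is then (i) a covering lemma showing that every point of $X$ avoids $D_l(X)$ for some line $l$ (Lemma~\ref{CX.le}, which uses the existence of lines through every point of $X$ and a general linear section argument for $n \ge 4$), and (ii) the gluing statement Lemma~\ref{LTC.le}.(1), where transversality is the condition that no proper subset of the union of the cylinders is invariant under all of them---a global orbit-connectivity condition, not merely the pointwise tangent-spanning you describe. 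If you replace your vertex projections by projections from lines on $X$, your outline becomes essentially the paper's proof; as written, the degree-two cover is a fatal obstruction.
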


The proof is geometric in spirit. We begin by establishing the flexibility of the complement in \( \bb{P}^n \) of a quadric of rank at least three, achieved by constructing an explicit transversal covering by principal cylinders. We then employ a birational model due to Kishimoto, Prokhorov, and Zaidenberg that relates open subsets of \( X \) to such complements, thereby transferring flexibility to the affine cone. In this way, flexibility of the affine cone over the intersection of two quadrics emerges as a direct consequence of the geometry of its ambient projective space.

The paper is organized as follows. Section 2 reviews the correspondence between $\Ga$ actions and cylindrical open subsets in affine varieties. Section 3 establishes a general criterion for flexibility in terms of transversal coverings by cylinders. Section 4 is devoted to proving the flexibility of affine cones over quadrics of rank at least three. Section 5 then shows that the complement of a quadric of rank $\geq 3$ in projective space admits a transversal cylinder covering and is therefore flexible. Finally, Section 6 applies these results to smooth complete intersections of two quadrics, completing the proof of the main theorem.

\medskip

Throughout this paper we will use the following notations and conventions.
\begin{itemize}
\item
Let $\alpha \colon G \times X \to X$ be an action of an algebraic group $G$ on a variety $X$. Given an invariant subvariety $Y \subseteq X$, we denote by $\restrict{\alpha}{Y}$ the induced action $G \times Y \to Y$. Given an automorphism $\phi \in \Aut(X)$, by the action on $X$ conjugated to $\alpha$ by $\phi$ we mean the action $\beta \colon G \times X \to X$ defined by the formula
$$
\beta(g, x) =
\phi \Big( \alpha \big( g, \phi^{-1}(x) \big) \Big).
$$


\item
Let $X$ be a variety. We will use the notations
$$
\bb{D}_X(h) = \{ x \in X \mid h(x) \ne 0 \} \subseteq X
\insertText{and}
\bb{V}_X(h) = X \setminus \bb{D}_X(h)
$$
in two cases: either $h \in \bb{K}(X)$ is a rational function, or $X \subseteq \bb{P}^n$ is a projective variety and $h \in H^0 \big( \bb{P}^n, \cal{O}_{\bb{P}^n}(m) \big)$, $m \ge 0$ is a homogeneous polynomial.


\item
Given a sequence $(x_1, \dots, x_n)$, we denote
$$
(x_1, \dots, \hat{x}_i, \dots, x_n) =
(x_1, \dots, x_{i - 1}, x_{i + 1}, \dots, x_n).
$$


\item
Given a projective variety $X \subseteq \bb{P}^n$ and a point $x \in X$, we denote by $\bb{T}_x X$ the projective tangent space of $X$ in $x$.


\item
Given a subset $W \subseteq \bb{P}^n$ in a projective space, we denote by $\gen{W}_{\bb{P}^n}$ the minimal linear subspace in $\bb{P}^n$ containing $W$.


\item
Given two non-negative integers $k \le n$, we denote by $\Gr{k}{n}$ the Grassmannian of $k$-dimensional subspaces in an $n$-dimensional $\bb{K}$-vector space.
\end{itemize}

\medskip

The authors are grateful to Alexander Perepechko, Yuri Prokhorov, and Nikita Virin for useful discussion and references.


\section{Cylinders and actions of the additive group}
\label{CAAG.se}


In this section we discuss cylinders in affine varieties and their connection with $\Ga$-actions.

\begin{defi}
Let $X$ be a variety and $k$ be a positive integer. By a \emph{$k$-cylinder in $X$} we mean an open subset $U$ together with an isomorphism $U \cong \bb{A}^k \times Z$ for some affine variety $Z$. By an \emph{$\bb{A}^k$-fiber} of a $k$-cylinder $U$ we mean a fiber of the projection $U \to Z$ to the second factor.
\end{defi}

\begin{defi}
Let $\alpha \colon \Ga^k \times X \to X$ be a $\Ga^k$-action on a variety $X$. We say that $\alpha$ is \emph{cylindrical} if there is an isomorphism $X \cong \bb{A}^k \times Z$ for some affine variety $Z$ such that $\alpha$ acts by translations along the first factor.
\end{defi}

Note that if $\alpha$ is a cylindrical $\Ga^k$-action on a variety $X \cong \bb{A}^k \times Z$, then $X$ is affine and every $\bb{A}^k$-fiber of $X$ is an orbit of $\alpha$.

Let $X$ be an affine variety. Recall the connection between principal $1$-cylinders in $X$ and effective $\Ga$-actions on $X$ from \cite[Proposition~3.1.5]{KPZ11}. If $U \subseteq X$ is a principal $1$-cylinder, then there exists an effective $\Ga$-action $\alpha$ on $X$ such that every point in $X \setminus U$ is fixed by $\alpha$ and every $\bb{A}^1$-fiber of $U$ is an orbit of $\alpha$. Conversely, if $\alpha$ is an effective $\Ga$-action on $X$, then there exists an $\alpha$-invariant $1$-cylinder $U \subseteq X$ such that $\restrict{\alpha}{U}$ is cylindrical.

\begin{lemm} \label{CGA.le}
Let $X$ be an affine variety and $\alpha$ be an effective $\Ga$-action on $X$. Assume that there is an isomorphism $X \cong \bb{A}^1 \times Z$ such that every $\bb{A}^1$-fiber is an orbit of $\alpha$. Then $\alpha$ is cylindrical.
\end{lemm}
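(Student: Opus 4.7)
The plan is to translate the problem into the language of locally nilpotent derivations and then exhibit an explicit rescaling of the first coordinate that converts $\alpha$ into a translation. Fix an isomorphism $X \cong \bb{A}^1 \times Z$ as in the hypothesis and identify $\bb{K}[X] = \bb{K}[Z][t]$, where $t$ is the coordinate on the $\bb{A}^1$ factor. The effective $\Ga$-action $\alpha$ corresponds to a nonzero locally nilpotent derivation $\partial$ of $\bb{K}[X]$.

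The assumption that every $\bb{A}^1$-fiber is an $\alpha$-orbit is equivalent to $\alpha$ preserving each fiber of the projection $X \to Z$, which in turn is equivalent to $\partial(\bb{K}[Z]) = 0$. Hence $\partial$ is $\bb{K}[Z]$-linear as a derivation of $\bb{K}[Z][t]$ and has the form $\partial = p \cdot \partial/\partial t$, where $p := \partial(t) \in \bb{K}[Z][t]$.

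The heart of the argument is to show $p \in \bb{K}[Z]^{\times}$. Assume toward contradiction that $n := \deg_t p \ge 1$ with leading coefficient $a \in \bb{K}[Z] \setminus \{0\}$. A direct induction on $k$ gives $\deg_t \partial^k(t) = 1 + k(n-1)$, with leading coefficient a nonzero product of positive integers and powers of $a$ in the integral domain $\bb{K}[Z]$; in characteristic zero this never vanishes, contradicting local nilpotence of $\partial$. Therefore $p \in \bb{K}[Z]$. Moreover, restricting $\partial$ to the fiber over any point $z \in Z$ yields the scalar derivation $p(z) \partial/\partial t$ on $\bb{K}[t]$, which must be nonzero because the corresponding $\Ga$-action is transitive on the one-dimensional fiber. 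Consequently $p$ vanishes nowhere on $Z$, so $p \in \bb{K}[Z]^{\times}$.

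Once $p$ is a unit, set $s := t/p$, which is a regular function on $X$ satisfying $\partial(s) = \partial(t)/p = 1$. In the coordinates $(s, z)$ the derivation $\partial$ becomes $\partial/\partial s$, and the isomorphism $X \cong \bb{A}^1 \times Z$ sending $x \mapsto (s(x), z(x))$ intertwines $\alpha$ with translation along the first factor; this is precisely the cylindrical structure sought. The only delicate point I anticipate is the nilpotence computation, where one must keep track of the leading coefficients through each iteration, relying on $\bb{K}[Z]$ being a domain and $\bb{K}$ having characteristic zero; everything else is formal once $\partial$ has been identified as a $\bb{K}[Z]$-linear derivation.
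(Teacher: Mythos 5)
Your proposal is correct and follows essentially the same route as the paper: pass to the locally nilpotent derivation $\partial$ on $\bb{K}[Z][t]$, note $\bb{K}[Z] \subseteq \Ker\partial$, show $\partial(t)$ is an invertible element of $\bb{K}[Z]$, and rescale $t$ by that unit to obtain a translation. The only differences are that you spell out the degree induction behind the paper's ``it is easy to see that $\partial(t) \in \bb{K}[Z]$'' and verify non-vanishing by restricting to fibers rather than via the fixed-point locus $\bb{V}_X(h)$; both are minor elaborations of the same argument.
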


\begin{proof}
Let $\partial$ be the locally nilpotent derivation on $\bb{K}[X] \cong \bb{K}[Z][t]$ corresponding to $\alpha$. Then, since $\alpha$ acts only along $\bb{A}^1$-fibers, we have $\bb{K}[Z] \subseteq \Ker \partial$. Since $\partial$ is locally nilpotent, it is easy to see that $\partial(t) = h$ for some $h \in \bb{K}[Z]$. Therefore,
$$
\alpha \big(s, (t, z) \big) = (t + s h, z)
\insertText{for all}
(t, z) \in \bb{A}^1 \times Z.
$$
Since $\alpha$ has no fixed points and every point of $\bb{V}_X(h)$ is fixed by $\alpha$, we conclude that $h$ is an invertible function on $X$. Consider an automorphism
$$
\phi \colon \bb{A}^1 \times Z \to \bb{A}^1 \times Z, \quad
\phi(t, z) = \left( \frac{t}{h}, z \right).
$$
Then the action $\beta$ on $X$ conjugated to $\alpha$ by $\phi$ is defined by
$$
\beta \big( s, (t, z) \big) =
\phi \Big( \alpha \big( s, (h t, z) \big) \Big) =
\phi(h t + s h, z) = (t + s, z),
$$
so that $\beta$ acts by translations along the first factor.
\end{proof}

It follows that if $U \subseteq X$ is a principal $1$-cylinder, then there exists an effective $\Ga$-action $\alpha$ on $X$ such that every point in $X \setminus U$ is fixed by $\alpha$ and $\restrict{\alpha}{U}$ is cylindrical.

\begin{defi}
Let $X$ be a variety and $U \cong \bb{A}^k \times Z$ be a $k$-cylinder in $X$. By a \emph{coordinate decomposition} of $U$ we mean a choice of coordinates $x_1, \dots, x_k$ in $\bb{A}^k$ and a collection $U_1, \dots, U_k$ of $1$-cylinders in $X$ such that for each $1 \le i \le k$ we have $U_i = U$ and $U_i \cong \bb{A}^1 \times Z_i$, where
$$
\bb{A}^1 = \Spec \bb{K}[x_i]
\insertText{and}
Z_i = \Spec \bb{K}[Z][x_1, \dots, \hat{x}_i, \dots, x_k].
$$
\end{defi}

Let $X$ be an affine variety and $U \subseteq X$ be a $k$-cylinder. Each $1$-cylinder in a coordinate decomposition of $U$ defines a $\Ga$-action on $X$ and one can easily see that these $k$ actions define an effective $\Ga^k$-action $\alpha$ on $X$ such that every point in $X \setminus U$ is fixed by $\alpha$ and $\restrict{\alpha}{U}$ is cylindrical.

\begin{defi}
Let $X$ be an affine variety. Given a principal $k$-cylinder $U \subseteq X$, we say that an effective $\Ga^k$-action $\alpha$ on $X$ is \emph{induced} by $U$ if every point in $X \setminus U$ is fixed by $\alpha$ and $\restrict{\alpha}{U}$ is cylindrical.
\end{defi}

Summing up, we see that if $U \subseteq X$ is a principal $k$-cylinder in an affine variety $X$, then there exists a $\Ga^k$-action on $X$ induced by $U$.

\begin{defi}
Let $X$ be a variety.
\begin{enumerate}
\item
Let $U \cong \bb{A}^k \times Z$ be a $k$-cylinder in $X$. Denote by $\pi \colon U \to Z$ the natural projection. A subset $S \subseteq X$ is called \emph{invariant} with respect to $U$ if
$$
S \cap U = \pi^{-1}(\pi(S \cap U)).
$$


\item
Let $\cal{U} = \{U_i\}_{i \in I}$ be a collection of cylinders in $X$. A subset $S \subseteq X$ is called \emph{invariant} with respect to $\cal{U}$ if $S$ is invariant with respect to each $U_i$, $i \in I$.
\end{enumerate}
\end{defi}

In other words, a subset $S \subseteq X$ is invariant with respect to a cylinder $U \subseteq X$, if whenever $S$ contains a point $x \in U$, it also contains the whole fiber of $U$ which contains $x$.

Let $X$ be an affine variety and $\cal{U} = \{U_i\}_{i \in I}$ be a collection of cylinders in $X$. For each $i \in I$ let $\alpha_i$ be an action on $X$ induced by $U_i$. One can easily see that a subset $S \subseteq X$ is invariant with respect to $\cal{U}$ if and only if $S$ is $\alpha_i$-invariant for all $i \in I$.


\section{Cylinders and flexibility}
\label{CF.se}


In this section we establish a criterion of flexibility of an affine variety in terms of cylinders. We also recall a sufficient condition of flexibility of an affine cone over a projective variety in terms of cylinders.

\begin{defi}
Let $X$ be a variety and $\cal{U} = \{U_i\}_{i \in I}$ be a collection of cylinders in $X$. The collection $\cal{U}$ is called \emph{transversal} if no proper subset of $\bigcup_{i \in I} U_i$ is invariant with respect to $\cal{U}$.
\end{defi}

The following lemma provides simple technical observations on transversal cylinder coverings.

\begin{lemm} \label{LTC.le}
Let $X$ be a variety.
\begin{enumerate}
\item
Let $\cal{V} = \{V_i\}_{i \in I}$ be a covering of $X$ by some open subsets. Assume that for each $i \in I$ there exists a transversal covering $\cal{U}_i = \{U_{i, j}\}_{j \in J_i}$ of $V_i$ by cylinders. Then
$$
\cal{U} = \bigcup_{i \in I} \cal{U}_i
$$
is a transversal covering of $X$.


\item
Let $\cal{U} = \{U_i\}_{i \in I}$ be a covering of $X$ by cylinders. For each $i \in I$ let
$$
\cal{V}_i = \{U_{i, 1}, \dots, U_{i, k_i}\}
$$
be a coordinate decomposition of the cylinder $U_i$. Denote $\cal{V} = \bigcup_{i \in I} \cal{V}_i$. Then $\cal{U}$ is transversal if and only if $\cal{V}$ is transversal.
\end{enumerate}
\end{lemm}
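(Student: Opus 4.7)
My plan is to treat the two parts separately, since they are essentially unrelated formal observations about invariance.

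For part~(1), I would show that every nonempty subset $S \subseteq X$ which is invariant with respect to $\cal{U}$ is in fact equal to $X$. The key reduction is that for each $i \in I$, the intersection $S \cap V_i$ is invariant with respect to $\cal{U}_i$ regarded as a cylinder covering of the ambient open subvariety $V_i$; this reduction is legitimate because the fibers of any $U_{i, j}$ lie inside $V_i$, so the notions of invariance ``inside $X$'' and ``inside $V_i$'' coincide on those fibers. Transversality of $\cal{U}_i$ then forces $S \cap V_i$ to be either empty or all of $V_i$. Hence $S$ is the union of some subcollection of $\{V_i\}_{i \in I}$, and its complement $X \setminus S$ is the union of the remaining $V_j$'s. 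Both $S$ and $X \setminus S$ are therefore open in $X$; since $X$ is irreducible (hence connected) and $S$ is nonempty, I conclude $S = X$.

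For part~(2), I would first observe that $\cal{U}$ and $\cal{V}$ cover the same subset of $X$, because each $U_{i, \ell}$ coincides with $U_i$ set-theoretically. It then suffices to show that invariance with respect to $\cal{U}$ is equivalent to invariance with respect to $\cal{V}$, which reduces to a local, cylinder-by-cylinder statement: fixing a single $k$-cylinder $U \cong \bb{A}^k \times Z$ with coordinate decomposition $U_1, \dots, U_k$, I must verify that a subset $T \subseteq U$ is a union of $\bb{A}^k$-fibers of $U$ if and only if it is a union of $\bb{A}^1$-fibers of each $U_\ell$. One direction is immediate, since each $\bb{A}^k$-fiber of $U$ is foliated by the $\bb{A}^1$-fibers of every $U_\ell$. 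For the converse, starting from a point $p \in T$ I would iterate invariance along the coordinate directions $x_1, \dots, x_k$, sweeping out the entire $\bb{A}^k$-fiber through $p$ after $k$ steps.

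I do not foresee a serious obstacle in either part. The only mild subtlety --- arising in part~(1) --- is to confirm that the transversality of $\cal{U}_i$, formally a statement about subsets of $V_i$, may be applied to $S \cap V_i$ for an invariant $S \subseteq X$; this is immediate from the definition of invariance once one notes that the relevant fibers lie inside $V_i$.
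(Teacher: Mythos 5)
Your proposal is correct and follows essentially the same route as the paper: in part (1) both arguments reduce to the invariance of $S \cap V_i$ with respect to $\cal{U}_i$ and then invoke irreducibility of $X$ (you via connectedness of a clopen decomposition, the paper via pairwise intersection of nonempty opens — the same point), and in part (2) both rest on the observation that a subset is a union of $\bb{A}^k$-fibers of $U_i$ if and only if it is a union of $\bb{A}^1$-fibers of each cylinder in the coordinate decomposition, which you spell out by the same coordinate-by-coordinate sweep the paper leaves implicit.
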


\begin{proof}
\leavevmode
\begin{enumerate}
\item
Let $S \subseteq X$ be a nonempty subset, which is invariant with respect to $\cal{U}$. Since $\cal{V}$ is a covering of $X$, there exists $i \in I$ such that the intersection $S \cap V_i$ is nonempty. One can easily see that this intersection is invariant with respect to $\cal{U}_i$, hence $S \cap V_i = V_i \subseteq S$. Since $X$ is irreducible, we conclude that $S$ intersects $V_i$ for all $i \in I$, so, by the same argument, we have $V_i \subseteq S$ for all $i \in I$. Therefore, $S = X$.


\item
The assertion follows from the fact that for each $i \in I$ a subset $S \subseteq X$ is invariant with respect to $U_i$ if and only if $S$ is invariant with respect to $\cal{V}_i$.
\end{enumerate}
\end{proof}

The following two results relate flexibility of affine varieties to existence of a transversal covering by cylinders.

\begin{lemm} \label{TCIF.le}
Let $X$ be an affine variety. Assume that there exists a transversal covering $\cal{U} = \{U_i\}_{i \in I}$ of $X$ by principal cylinders. Then $X$ is smooth and flexible.
\end{lemm}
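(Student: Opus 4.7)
The plan is to invoke Theorem~\ref{FIT.th}: it suffices to show that $X$ is smooth and that $\SAut(X)$ acts transitively on $X^{\reg}$. The transversality hypothesis will be pressed against two natural candidate subsets of $X$---the smooth locus and a single $\SAut(X)$-orbit---and in each case the point is that both are invariant with respect to $\cal{U}$, hence cannot be proper subsets of $\bigcup_{i \in I} U_i = X$.

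First I would establish smoothness. For each $i \in I$, pick an induced $\Ga^{k_i}$-action $\alpha_i$ on $X$ supplied by the principal cylinder $U_i$; on $U_i \cong \bb{A}^{k_i} \times Z_i$ this action is cylindrical, so its orbits in $U_i$ are precisely the $\bb{A}^{k_i}$-fibers. The smooth locus $X^{\reg}$ is nonempty and preserved by every automorphism of $X$, in particular by each $\alpha_i$; consequently $X^{\reg} \cap U_i$ is a union of $\bb{A}^{k_i}$-fibers, which means that $X^{\reg}$ is invariant with respect to $U_i$ for every $i$, hence invariant with respect to $\cal{U}$. Since $\bigcup_{i \in I} U_i = X$ and transversality forbids proper invariant subsets of this union, we conclude $X^{\reg} = X$.

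Next, fix any point $x \in X$ and consider its orbit $\Omega = \SAut(X) \cdot x$. Each induced action $\alpha_i$ is generated, via a coordinate decomposition of $U_i$ as in Lemma~\ref{LTC.le}, by one-parameter unipotent subgroups and therefore factors through $\SAut(X)$; hence $\Omega$ is $\alpha_i$-invariant for every $i$, and by the observation recalled at the end of Section~\ref{CAAG.se} it is invariant with respect to $\cal{U}$. As $\Omega$ is nonempty and lies in $X$, transversality again forces $\Omega = X$, so $\SAut(X)$ acts transitively on $X^{\reg} = X$. Applying Theorem~\ref{FIT.th} then yields flexibility of $X$.

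The only point requiring care---rather than a serious obstacle---is precisely the identification, in both halves of the argument, of the orbits of the induced $\Ga^{k_i}$-action on $U_i$ with the $\bb{A}^{k_i}$-fibers; but this is built into the definition of an induced action, since $\restrict{\alpha_i}{U_i}$ is cylindrical. Everything else is essentially bookkeeping with the definitions of invariance and transversality.
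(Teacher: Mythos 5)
Your proposal is correct and follows essentially the same route as the paper: both arguments play the transversality hypothesis off against sets that are invariant under the induced $\Ga^{k_i}$-actions. The only cosmetic difference is that the paper deduces smoothness from transitivity of the group generated by the induced actions (a homogeneous variety is smooth), whereas you obtain it directly by observing that $X^{\reg}$ is an invariant subset and hence equals $X$; both are instances of the same mechanism.
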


\begin{proof}
For each $i \in I$ choose an action $\alpha_i$ on $X$ induced by $U_i$ and denote by $H_i$ the corresponding subgroup of $\Aut(X)$. Let $G$ be the subgroup of $\Aut(X)$ generated by all $H_i$. Then $G$ acts on $X$ transitively, hence $X$ is smooth. Moreover, $G \subseteq \SAut(X)$, so $X$ is flexible by Theorem~\ref{FIT.th}.
\end{proof}

\begin{prop} \label{CF.pr}
Let $X$ be an affine variety of dimension at least one. Then $X$ is flexible if and only if there exists a transversal covering of $X^\reg$ by cylinders, which are principal in $X$.
\end{prop}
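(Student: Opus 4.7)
The plan is to prove the two implications separately. The reverse direction follows the template of Lemma~\ref{TCIF.le} almost verbatim, with the only adjustment being that the group $G$ of induced actions now acts transitively on $X^\reg$ rather than on all of $X$. The forward direction requires genuine work: I have to build a transversal cylindrical covering of $X^\reg$ from the bare flexibility hypothesis, and the main obstacle is a local slice step that places a principal cylinder through any prescribed smooth point.

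For the reverse direction, I would pick, for each cylinder $U_i$ in the given covering $\cal{U} = \{U_i\}_{i \in I}$, an induced $\Ga^{k_i}$-action $\alpha_i$ on $X$, and let $G \subseteq \SAut(X)$ be the subgroup generated by their images. For any $x \in X^\reg$, the orbit $G \cdot x$ is a nonempty subset of $X^\reg$, hence of $\bigcup_i U_i$; since the $\alpha_i$-orbits on $U_i$ coincide with the $\bb{A}^{k_i}$-fibers, $G \cdot x$ is invariant with respect to $\cal{U}$. Transversality then forces $G \cdot x = \bigcup_i U_i$, and combining this with $G \cdot x \subseteq X^\reg$ yields $G \cdot x = X^\reg$. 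Hence $\SAut(X)$ acts transitively on $X^\reg$, and Theorem~\ref{FIT.th} yields flexibility.

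For the forward direction, assume $X$ is flexible. I would take $\cal{U}$ to be the collection of all principal $1$-cylinders of the form $\bb{D}_X(\partial s) \subseteq X$, where $\partial$ ranges over the locally nilpotent derivations of $\bb{K}[X]$ and $s$ over its local slices (i.e.\ $\partial^2 s = 0$, $\partial s \ne 0$); the discussion of Section~\ref{CAAG.se} ensures that each such $\bb{D}_X(\partial s)$ is indeed a principal $1$-cylinder on which the $\Ga$-action of $\partial$ becomes cylindrical. To see that $\cal{U}$ covers $X^\reg$, for a given $x \in X^\reg$ the transitivity of $\SAut(X)$ on $X^\reg$ supplies a $\Ga$-action on $X$ not fixing $x$, and the classical local slice lemma (the converse half of \cite[Proposition~3.1.5]{KPZ11}) then produces a local slice $s$ of the associated derivation $\partial$ with $(\partial s)(x) \ne 0$, so that $x \in \bb{D}_X(\partial s) \in \cal{U}$.

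Transversality of $\cal{U}$ is then verified by the same mechanism as in the reverse direction: a nonempty subset $S \subseteq \bigcup \cal{U}$ invariant with respect to $\cal{U}$ contains, for each $\bb{D}_X(\partial s) \in \cal{U}$, the whole $\bb{A}^1$-fiber through any of its points. Since $\partial s$ is $\alpha$-invariant, the $\alpha$-orbit of a non-fixed point $y \in S$ actually lies entirely inside some $\bb{D}_X(\partial s)$ (apply the local slice lemma to $\partial$ at $y$); hence $S$ is invariant under every effective $\Ga$-action on $X$, i.e.\ under $\SAut(X)$, and the transitivity of $\SAut(X)$ on $X^\reg$ forces $S = X^\reg$. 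The main obstacle throughout is the local slice lemma, namely the classical fact that for any LND on an affine $\bb{K}$-algebra the derivative $\partial s$ of a local slice can always be arranged to be nonzero at any prescribed non-fixed point of the corresponding $\Ga$-action.
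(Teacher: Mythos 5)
Your reverse implication is correct and coincides with the paper's (it is the argument of Lemma~\ref{TCIF.le} transported to $X^\reg$). The forward direction, however, has a genuine gap, concentrated precisely in the step you flag as the main obstacle: the ``classical local slice lemma'' you invoke is false. For a locally nilpotent derivation $\partial$ of $\bb{K}[X]$, the union of the cylinders $\bb{D}_X(\partial s)$ over all local slices $s$ is the complement of the zero set of the plinth ideal $\Ker\partial \cap \partial(\bb{K}[X])$, and this zero set can strictly contain the fixed-point locus. A concrete counterexample is the Danielewski surface $X=\{xz=y^2-1\}\subseteq\bb{A}^3$ with $\partial x=0$, $\partial y=x$, $\partial z=2y$: the action is fixed-point free, $\Ker\partial=\bb{K}[x]$, and any $s$ with $\partial s=p(x)$ and $p(0)\neq0$ would have to be of the form $f_0(x)+\frac{p(x)}{x}y$ in $\bb{K}[x,x^{-1},y]$, whose polar parts along the two lines $\{x=0,\,y=\pm1\}$ are $\pm p(0)/x$ and hence cannot both be cancelled by the single Laurent polynomial $f_0$; so $\partial s$ vanishes on $\{x=0\}$ for \emph{every} local slice $s$, although no point there is fixed. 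Consequently both of your uses of the lemma break down: the covering claim for your $\cal{U}$ is unjustified as written, and, more seriously, the transversality argument collapses, since a set $S$ invariant with respect to all the $\bb{D}_X(\partial s)$ need not be invariant under the $\Ga$-action of $\partial$ at a non-fixed point lying in the zero set of the plinth ideal, so you cannot conclude that $S$ is $\SAut(X)$-invariant. A further, smaller issue is that your cylinders $\bb{D}_X(\partial s)$ need not lie inside $X^\reg$, so $\cal{U}$ is not literally a covering \emph{of} $X^\reg$.

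The paper's proof avoids all of this: it fixes a single principal $1$-cylinder $U=\bb{D}_X(h)$, shrinks it so that $U\subseteq X^\reg$ (citing \cite[Section~1.1]{KPZ13}), and takes $\cal{U}=\{gU\}_{g\in\SAut(X)}$. Covering of $X^\reg$ is then immediate from transitivity, and transversality follows from $2$-transitivity (available by Theorem~\ref{FIT.th} once $\dim X\ge2$; the case $\dim X=1$ is treated separately): if $S$ contains a fiber $A$ of some $U'\in\cal{U}$, pick distinct $p,x\in A$ and, for an arbitrary $y\in X^\reg$, choose $g\in\SAut(X)$ with $g\cdot p=p$ and $g\cdot x=y$; then $p$ and $y$ lie on one fiber of $gU'\in\cal{U}$, whence $y\in S$. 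If you want to keep your framework, you must at minimum replace the local slice lemma by this conjugation trick and restrict to translates of a cylinder contained in $X^\reg$.
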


\begin{proof}
The case of $\dim X = 1$ is trivial since the only affine curve which admits an effective $\Ga$-action is $\bb{A}^1$. Therefore, we may assume that $\dim X \ge 2$.

Assume that $X$ is flexible. By Theorem~\ref{FIT.th} the group $\SAut(X)$ acts transitively on $X^\reg$, hence there exists an effective $\Ga$-action on $X$. It follows that there exists a principal \linebreak
$1$-cylinder $U = \bb{D}_X(h)$ in $X$. Without loss of generality we may assume that $U \subseteq X^\reg$, see \cite[Section~1.1]{KPZ13}. Consider a collection
$$
\cal{U} = \{g U\}_{g \in \SAut(X)}
$$
of principal $1$-cylinders in $X$. Clearly, $\cal{U}$ covers $X^\reg$. Consider a nonempty subset $S \subseteq X$ which is invariant with respect to $\cal{U}$. There exists $U' \in \cal{U}$ such that $S \cap U' \ne \emptyset$. It follows that there exists an $\bb{A}^1$-fiber $A$ of $U$ such that $A \subseteq S$. Consider two distinct points $p, x \in A$ and let $y \in X^\reg$ be an arbitrary point distinct from $p$. By Theorem~\ref{FIT.th} the group $\SAut(X)$ acts infinitely transitively on $X^\reg$, hence there exists $g \in \SAut(X)$ such that $g \cdot x = y$ and $g \cdot p = p$. Then $p \in S \cap (g U')$ and the points $p, y$ belong to the same $\bb{A}^1$-fiber of the cylinder $g U'$. It follows that $y \in S$. Therefore, $S = X^\reg$ and $\cal{U}$ is transversal.

Conversely, assume that there is a transversal covering of $X^\reg$ by cylinders, which are principal in $X$. By utilizing the same argument as in the proof of Lemma~\ref{TCIF.le}, we see that $\SAut(X)$ acts on $X^\reg$ transitively, hence $X$ is flexible by Theorem~\ref{FIT.th}.
\end{proof}

Now we move to flexibility of affine cones over projective varieties. Given a projective variety $X$ and a very ample divisor $H$ on $X$, denote by $\AffCone_H(X)$ the affine cone over the embedding $X \into \bb{P}^m$ defined by $H$.

\begin{defi}
Let $X$ be a projective variety, $H$ be a very ample divisor on $X$, and $U \subseteq X$ be an open subset. The subset $U$ is called \emph{$H$-polar} if
$$
X \setminus U = \Supp D
$$
for some effective divisor $D$ on $X$ such that $D \sim d H$ for some positive integer $d$.
\end{defi}

Let $X$ be a normal projective variety with $\Pic(X) \cong \bb{Z}$ and $H$ be a very ample divisor on $X$. Then every open subset $U \subseteq X$ such that $X \setminus U$ is of pure codimension one in $X$ is $H$-polar. In particular, if $X$ is smooth and $U$ is affine, then $U$ is $H$-polar.

The following generalization of \cite[Theorem~5]{Pe13} about $1$-cylinders is obtained by analyzing the proof given in~\cite{Pe13}, utilizing Lemma~\ref{LTC.le}.(2), and using the results of \cite[Theorem~3.1.9.(b)]{KPZ11} and \cite[Lemma~2.8]{KPZ13}. Recall that a projective variety $X$ is called projectively normal with respect to a very ample divisor $H$ on $X$ if $\AffCone_H(X)$ is normal.

\begin{theo} \label{FAC.th}
Let $X$ be a normal projective variety and $H$ be a very ample divisor on~$X$. Let $\{U_i\}_{i \in I}$ be a transversal covering of $X^\reg$ by $H$-polar principal cylinders. Assume that either $X$ is smooth and $\Pic(U_i) = 0$ for all $i \in I$, or $X$ is projectively normal with respect to~$H$. Then $\AffCone_H(X)$ is flexible.
\end{theo}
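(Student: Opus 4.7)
The plan is to reduce to $1$-cylinders, lift each to the affine cone via the Kishimoto--Prokhorov--Zaidenberg machinery, and assemble a transversal covering of $\AffCone_H(X)^\reg$ by cylinders principal in $\AffCone_H(X)$, whereupon Proposition~\ref{CF.pr} delivers flexibility.

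First, by Lemma~\ref{LTC.le}.(2), after replacing each $U_i$ by the principal $1$-cylinders coming from a coordinate decomposition, I may assume that the transversal covering $\{U_i\}_{i \in I}$ of $X^\reg$ consists of $H$-polar principal $1$-cylinders $U_i = \bb{D}_X(f_i)$. The refinement remains transversal by the same lemma, and $H$-polarity is preserved since the underlying open subsets are unchanged.

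Second, for each $U_i$ I invoke the lifting technology of Kishimoto, Prokhorov, and Zaidenberg: under the hypothesis of smoothness together with $\Pic(U_i)=0$, the lift is provided by \cite[Lemma~2.8]{KPZ13}, and under the projective-normality hypothesis, by \cite[Theorem~3.1.9(b)]{KPZ11}. Either way, one obtains a principal cylinder $\widetilde{U}_i \subseteq \AffCone_H(X)$ sitting over $U_i$, together with an effective $\Ga$-action on $\AffCone_H(X)$ that fixes the complement of $\widetilde{U}_i$ and acts cylindrically on $\widetilde{U}_i$, realized concretely by extending the locally nilpotent derivation downstairs to the homogeneous coordinate ring of the cone after clearing denominators against the polar divisor.

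Third, I would verify that $\{\widetilde{U}_i\}_{i \in I}$ is a transversal covering of $\AffCone_H(X)^\reg$ by cylinders principal in $\AffCone_H(X)$. The covering property follows from the analogous property of $\{U_i\}$ downstairs together with the fact that any smooth point of $\AffCone_H(X)$ distinct from the vertex lies over $X^\reg$; in the only exceptional case where the vertex itself is smooth, the cone is an affine space and flexibility is immediate. For transversality, a nonempty invariant subset $\widetilde{S} \subseteq \AffCone_H(X)^\reg$ projects under the natural rational map $\AffCone_H(X) \rTo X$ to a nonempty subset of $X^\reg$ invariant with respect to $\{U_i\}$; transversality downstairs forces this image to be all of $X^\reg$, and invariance along the lifted cone rulings then completes $\widetilde{S}$ to $\AffCone_H(X)^\reg$. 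Proposition~\ref{CF.pr} yields flexibility.

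The main obstacle is the lifting step: producing a principal cylinder in the cone, and a compatible $\Ga$-action on it, from an $H$-polar principal $1$-cylinder in $X$. This amounts to extending a locally nilpotent derivation from the coordinate ring of $U_i$ to the graded coordinate ring of $\AffCone_H(X)$, and it is precisely here that the two alternative hypotheses are used to handle the degree shift and the possible twisting of the cone direction by a line bundle on $U_i$. Once the lift is available, the transversality transfer and the application of Proposition~\ref{CF.pr} are essentially formal.
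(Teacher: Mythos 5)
Your outline matches the route the paper itself takes: the paper does not write out a self-contained proof of Theorem~\ref{FAC.th}, but states that it is obtained by analyzing the proof of \cite[Theorem~5]{Pe13}, reducing to $1$-cylinders via Lemma~\ref{LTC.le}.(2), and invoking \cite[Theorem~3.1.9.(b)]{KPZ11} and \cite[Lemma~2.8]{KPZ13} for the two alternative hypotheses. Your first two steps (coordinate decomposition into principal $1$-cylinders, then lifting each $H$-polar principal $1$-cylinder to a principal cylinder in the cone with a compatible $\Ga$-action coming from a homogeneous locally nilpotent derivation) are exactly this recipe.

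The gap is in your third step, precisely where you declare the argument ``essentially formal.'' After you show that the projection of an invariant set $\widetilde{S}$ is all of $X^\reg$, you appeal to ``invariance along the lifted cone rulings'' to conclude $\widetilde{S} = \AffCone_H(X)^\reg$. But the rulings of the cone (the $\bb{G}_\mathrm{m}$-orbits through the vertex) are \emph{not} fibers of any lifted cylinder $\widetilde{U}_i$: each $\bb{A}^1$-fiber of $\widetilde{U}_i$ projects nonconstantly onto an $\bb{A}^1$-fiber of $U_i$ downstairs, so it is transverse to the rulings, and a set invariant with respect to all the $\widetilde{U}_i$ could a priori meet each ruling in a single point (a ``multisection'' of the punctured cone over $X^\reg$). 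Ruling out this possibility --- i.e., showing that the lifted actions also generate motion in the ruling direction --- is the actual content of \cite[Theorem~5]{Pe13}; it uses the homogeneity of the lifted derivations (so that the scaling $\bb{G}_\mathrm{m}$-action normalizes the group they generate) together with the replicas $F^k\partial$, or an orbit-dimension argument, none of which is supplied by invariance with respect to the cylinders alone. So the proposal correctly identifies and discharges the lifting step, but mislocates the remaining difficulty: the transversality transfer to the cone is the nontrivial part, and as written it is asserted rather than proved.
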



\section{Affine cones over quadrics}
\label{ACOQ.se}


The fact that affine cones over quadrics are flexible seems to be known. For the lack of a reference, we provide a proof for this fact in this section.

The following simple observation is a technical tool that will be used in the current and the next sections.

\begin{lemm} \label{CTSQ.le}
Let $X \subseteq \bb{P}^n$ be a quadric of rank at least three and let $x \in X^\reg$. Then there exist coordinates $x_0, \dots, x_n$ in $\bb{P}^n$ such that
$$
x = (0 : 1 : 0 : \ldots : 0), \quad
\bb{T}_x X = \bb{V}_{\bb{P}^n}(x_0),
$$
and $X$ is given by a quadratic form
$$
f = x_0 x_1 + g(x_2, \dots, x_n).
$$
\end{lemm}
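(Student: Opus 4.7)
The plan is to bring the defining quadratic form into the desired shape by three successive linear changes of coordinates, each preserving all data established previously.

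First, I would choose any linear coordinates on $\bb{P}^n$ with $x = (0 : 1 : 0 : \ldots : 0)$, which is possible since $\mathrm{PGL}_{n+1}$ acts transitively on points. Because $x$ is a smooth point of $X$, the projective tangent space $\bb{T}_x X$ is a hyperplane in $\bb{P}^n$. Apply a further linear transformation fixing $x$ (i.e.\ one that acts trivially on the $x_1$-coordinate axis through the origin of the affine chart $x_1 \ne 0$) to move this hyperplane to $\bb{V}_{\bb{P}^n}(x_0)$; concretely, replace $(x_0, x_2, \ldots, x_n)$ by an appropriate linear combination so that the linear form cutting out $\bb{T}_x X$ becomes $x_0$.

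Next, I would write the defining quadratic form in these coordinates as
$$
f = \sum_{0 \le i \le j \le n} a_{ij} x_i x_j.
$$
Since $x \in X$ gives $a_{11} = 0$, and the condition $\bb{T}_x X = \bb{V}_{\bb{P}^n}(x_0)$ says that the gradient of $f$ at $(0,1,0,\ldots,0)$ is proportional to $(1,0,\ldots,0)$, which forces $a_{1j} = 0$ for all $j \ge 2$ and $a_{01} \ne 0$. Rescaling $x_0$ (again fixing $x$ and the hyperplane $\bb{V}(x_0)$) reduces to $a_{01} = 1$, so
$$
f = x_0 x_1 + a_{00} x_0^2 + \sum_{j \ge 2} a_{0j} x_0 x_j + g(x_2, \ldots, x_n),
$$
where $g$ collects all terms in $x_2, \ldots, x_n$ only.

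Finally, perform the linear substitution $x_1 \mapsto x_1 - a_{00} x_0 - \sum_{j \ge 2} a_{0j} x_j$, leaving the other coordinates unchanged. This substitution fixes both $x = (0:1:0:\ldots:0)$ and the hyperplane $\bb{V}_{\bb{P}^n}(x_0)$, and the cross terms cancel, yielding $f = x_0 x_1 + g(x_2, \ldots, x_n)$ as required. There is no serious obstacle here: the argument is purely linear algebra, and the rank hypothesis is not used at any step for this normal form (it will intervene only in later applications of the lemma, where nontriviality of $g$ or a further diagonalization of $g$ is needed).
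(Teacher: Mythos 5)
Your argument is correct and is essentially the paper's own proof written out in coordinates: the paper likewise fixes $x$ and $\bb{T}_x X$ first, writes $f = x_0\, h(x_0,\dots,x_n) + g(x_2,\dots,x_n)$ with $h(0,1,0,\dots,0)=1$, and then replaces $x_1$ by $h$, which is exactly your final substitution absorbing the terms $a_{00}x_0^2 + \sum_{j\ge 2} a_{0j}x_0x_j$. Your observation that the rank hypothesis plays no role in obtaining this normal form also matches the paper's proof.
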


\begin{proof}
Choose coordinates $y_0, \dots, y_n$ in $\bb{P}^n$ such that
$$
x = (0 : 1 : 0 : \ldots : 0)
\insertText{and}
\bb{T}_x X = \bb{V}_{\bb{P}^n}(y_0).
$$
Then a direct computation shows that $X$ is given by a quadratic form
$$
f = y_0 h(y_0, \dots, y_n) + g(y_2, \dots, y_n)
$$
such that $h(0, 1, 0, \ldots, 0) = 1$. By setting
$$
x_0 = y_0, \quad x_1 = h(y_0, \dots, y_n), \insertText{and}
x_i = y_i \insertText{for} i \ge 2
$$
we obtain the required coordinates.
\end{proof}

The choice of coordinates as in Lemma~\ref{CTSQ.le} allows to find cylinders in a quadric.

\begin{lemm} \label{CQ.le}
Let $X \subseteq \bb{P}^n$ be a quadric of rank at least three and let $x \in X^\reg$. Then the subset $X \setminus \bb{T}_x X$ is an $(n - 1)$-cylinder in $X$, that is, $X \setminus \bb{T}_x X \cong \bb{A}^{n - 1}$.
\end{lemm}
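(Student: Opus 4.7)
The plan is to reduce directly to the coordinate normal form provided by Lemma~\ref{CTSQ.le} and then exhibit the complement of the tangent hyperplane as an affine graph.

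First, I would apply Lemma~\ref{CTSQ.le} to pick coordinates $x_0,\dots,x_n$ on $\bb{P}^n$ so that $x=(0:1:0:\ldots:0)$, $\bb{T}_xX=\bb{V}_{\bb{P}^n}(x_0)$, and $X=\bb{V}_{\bb{P}^n}(f)$ with
$$
f=x_0 x_1+g(x_2,\dots,x_n).
$$
Since $\bb{T}_xX=\bb{V}_{\bb{P}^n}(x_0)$, the complement $X\setminus \bb{T}_xX$ is exactly the intersection of $X$ with the standard affine chart $\bb{D}_{\bb{P}^n}(x_0)\cong\bb{A}^n$. Working in this chart with coordinates $x_1,\dots,x_n$ (i.e.\ setting $x_0=1$), the equation of $X$ becomes $x_1+g(x_2,\dots,x_n)=0$.

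Next, I would observe that this equation defines $x_1$ explicitly as a regular function of the remaining coordinates, so $X\setminus\bb{T}_xX$ is the graph of the polynomial morphism $(x_2,\dots,x_n)\mapsto -g(x_2,\dots,x_n)$. Concretely, the morphisms
$$
\bb{A}^{n-1}\longrightarrow X\setminus\bb{T}_xX,\qquad (a_2,\dots,a_n)\longmapsto\bigl(1:-g(a_2,\dots,a_n):a_2:\ldots:a_n\bigr),
$$
and
$$
X\setminus\bb{T}_xX\longrightarrow \bb{A}^{n-1},\qquad (x_0:\cdots:x_n)\longmapsto (x_2/x_0,\dots,x_n/x_0),
$$
are mutually inverse, giving $X\setminus\bb{T}_xX\cong\bb{A}^{n-1}$.

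There is essentially no obstacle here: once Lemma~\ref{CTSQ.le} trivialises the linear term in $x_1$, the defining equation is already solved for $x_1$, and what remains is a one-line verification that the displayed morphisms are mutual inverses. The only minor point to check is that the rank hypothesis is already absorbed into the normal form from Lemma~\ref{CTSQ.le}, so no further use of it is needed.
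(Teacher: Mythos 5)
Your proposal is correct and follows the paper's own argument essentially verbatim: both pass to the normal form of Lemma~\ref{CTSQ.le}, restrict to the affine chart $x_0 = 1$, and observe that the equation $x_1 + g(x_2,\dots,x_n) = 0$ exhibits $X \setminus \bb{T}_x X$ as a graph over $\bb{A}^{n-1}$. The explicit mutually inverse morphisms you write down are a harmless elaboration of what the paper leaves implicit.
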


\begin{proof}
Let $x_0, \dots, x_n$ be coordinates in $\bb{P}^n$ as in Lemma~\ref{CTSQ.le}. Then
$$
X \setminus \bb{T}_x X =
X \setminus \bb{V}_{\bb{P}^n}(x_0) \cong
\bb{V}_{\bb{A}^n} \big( \restrict{f}{x_0 = 1} \big) \cong
\bb{A}^{n - 1},
$$
since $\restrict{f}{x_0 = 1} = x_1 + g(x_2, \dots, x_n)$.
\end{proof}

\begin{lemm} \label{TCQ.le}
Let $X \subseteq \bb{P}^n$ be a quadric of rank at least three. Then the collection
$$
\cal{U} = \{ X \setminus \bb{T}_x X \}_{x \in X^\reg}
$$
is a transversal covering of $X^\reg$ by cylinders.
\end{lemm}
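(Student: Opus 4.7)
The plan is to verify the two defining properties of a transversal cylinder covering: that $\cal{U}$ covers $X^\reg$, and that no proper subset of this union is invariant with respect to $\cal{U}$. The cylinder structure of each $U_x := X \setminus \bb{T}_x X$ is already supplied by Lemma~\ref{CQ.le}. The single geometric input used throughout is the self-polarity of quadrics: for the symmetric bilinear form associated with the defining quadratic form, $y \in \bb{T}_x X$ is equivalent to $x \in \bb{T}_y X$ for any two smooth points $x, y \in X$.

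For the covering property, I would fix $y \in X^\reg$ and rewrite $y \in U_x$ as $x \notin \bb{T}_y X$. The assumption $\rk \ge 3$ forces $n \ge 2$, so $X$ is irreducible of dimension $n - 1 \ge 1$, and the hyperplane $\bb{T}_y X$ meets $X$ only in a proper subvariety. Hence $X^\reg \setminus \bb{T}_y X$ is a nonempty open subset of $X$, and any of its points provides the required $x$.

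For transversality, the structural observation is that each $U_x \cong \bb{A}^{n - 1}$ is an $(n - 1)$-cylinder over a single point, so it consists of just one $\bb{A}^{n - 1}$-fiber, namely itself. Invariance of a subset $S$ with respect to $U_x$ therefore reduces to the dichotomy $S \cap U_x \in \{\emptyset, U_x\}$. Given a nonempty subset $S \subseteq \bigcup_{x \in X^\reg} U_x = X^\reg$ invariant with respect to $\cal{U}$, I would pick some $y \in S$ and an arbitrary $y' \in X^\reg$, and then repeat the covering argument with two hyperplanes: since $X$ is irreducible and positive-dimensional, it is not contained in $\bb{T}_y X \cup \bb{T}_{y'} X$, so there exists $x \in X^\reg \setminus (\bb{T}_y X \cup \bb{T}_{y'} X)$, which places both $y$ and $y'$ in $U_x$. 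The dichotomy then yields $U_x \subseteq S$, hence $y' \in S$, proving $S = X^\reg$.

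No real obstacle is anticipated, since the argument rests only on the symmetry $y \in \bb{T}_x X \Leftrightarrow x \in \bb{T}_y X$ and on the fact that an irreducible positive-dimensional variety is not contained in a finite union of hyperplanes meeting it properly. The one point requiring care is the formal reading of invariance when the base of the cylinder is a single point, which collapses to the intuitive dichotomy above and drives the entire transversality argument.
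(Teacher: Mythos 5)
Your proof is correct and follows essentially the same route as the paper: transversality comes from the observation that each $X \setminus \bb{T}_x X \cong \bb{A}^{n-1}$ is a single fiber of itself, and the covering comes from finding, for each smooth point, a smooth point whose tangent hyperplane misses it. The only difference is one of execution — the paper exhibits such a point explicitly in the normal-form coordinates of Lemma~\ref{CTSQ.le}, while you use the self-polarity $y \in \bb{T}_x X \Leftrightarrow x \in \bb{T}_y X$ together with irreducibility of $X$; both are valid, and your version also usefully spells out the details behind the paper's ``clearly transversal.''
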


\begin{proof}
By Lemma~\ref{CTSQ.le} each cylinder in $\cal{U}$ is isomorphic to $\bb{A}^{n - 1}$, so $\cal{U}$ is clearly transversal. It remains to show that $\cal{U}$ covers $X^\reg$, that is, for each $x \in X^\reg$ there exists $y \in X^\reg$ such that $x \not\in \bb{T}_y X$. Let $x_0, \dots, x_n$ be coordinates in $\bb{P}^n$ as in Lemma~\ref{CTSQ.le}. Then for $y = (1 : 0 : \ldots : 0)$ we have $\bb{T}_y X = \bb{V}_{\bb{P}^n}(x_1)$ and $$
x = (0 : 1 : 0 : \ldots : 0) \not\in \bb{V}_{\bb{P}^n}(x_1)
$$
as required.
\end{proof}

\begin{prop} \label{ACOQF.pr}
Let $X \subseteq \bb{P}^n$ be a quadric of rank at least three. Then every affine cone over $X$ is flexible.
\end{prop}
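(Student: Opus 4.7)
My plan is to apply Theorem~\ref{FAC.th} to the transversal covering $\cal{U} = \{U_x\}_{x \in X^\reg}$ of $X^\reg$ by cylinders $U_x = X \setminus \bb{T}_x X \cong \bb{A}^{n-1}$ established in Lemma~\ref{TCQ.le}. Each $U_x$ is principal in $X$ because $\bb{T}_x X$ is cut out by a single linear form $\ell_x$, so $U_x = \bb{D}_X(\ell_x)$; and by Lemma~\ref{CQ.le} each cylinder is isomorphic to affine space, so in particular $\Pic(U_x) = 0$.

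For the $H$-polarity condition, I would observe that $X \setminus U_x = X \cap \bb{T}_x X$ is supported on a hyperplane section divisor whose class in $\Pic(X)$ equals the restriction $L$ of $\cal{O}_{\bb{P}^n}(1)$. Whenever the given very ample class $H$ is rationally commensurable with $L$---that is, whenever $\Pic(X)$ has rank one---one can choose positive integers $k, d$ with $kL \sim dH$ and take $D = k(X \cap \bb{T}_x X)$ to witness $H$-polarity. For the technical hypothesis of Theorem~\ref{FAC.th}, if $X$ is smooth then condition (a) is immediate from $\Pic(U_x) = \Pic(\bb{A}^{n-1}) = 0$; if $X$ is singular of rank $r$ with $3 \le r \le n$, I would verify (b) by writing the defining quadratic form in the normal form of Lemma~\ref{CTSQ.le} and computing that the singular locus of the affine cone---the common vanishing of the partial derivatives---cuts out a linear subspace of dimension $n + 1 - r$, of codimension $r - 1 \ge 2$ in the $n$-dimensional cone, so the hypersurface is normal and $X$ is projectively normal with respect to $L$.

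The main obstacle I would expect is the smooth quadric surface $X \cong \bb{P}^1 \times \bb{P}^1 \subseteq \bb{P}^3$, where $\Pic(X) = \bb{Z}^2$ and a very ample divisor $H$ of bidegree $(a, b)$ with $a \ne b$ is not commensurable with the hyperplane section $L = (1, 1)$, so the cylinders $U_x$ fail to be $H$-polar. To handle this case I would enrich $\cal{U}$ with principal cylinders extracted from the two rulings: removing a single fibre of either ruling produces an open subset of the form $\bb{A}^1 \times \bb{P}^1$ from which a principal one-cylinder can be carved out, with complement supported on a divisor of bidegree $(1, 0)$ or $(0, 1)$. These ruling classes together generate $\Pic(X) \otimes \bb{Q}$, so the augmented covering becomes $H$-polar for every very ample $H$; the remaining check is that transversality of the enlarged family survives---which, by Lemma~\ref{LTC.le}, follows once one observes that no ruling fibre is invariant under the tangent-hyperplane cylinders of Lemma~\ref{TCQ.le}---and Theorem~\ref{FAC.th} then yields flexibility of $\AffCone_H(X)$.
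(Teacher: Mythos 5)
Your overall strategy coincides with the paper's: apply Theorem~\ref{FAC.th} to the tangent--hyperplane covering of Lemma~\ref{TCQ.le}, with the case distinction driven by $\Pic(X)$. Your treatment of the singular case via projective normality is reasonable and in fact more explicit than the paper's (which simply invokes $\Pic(X) \cong \bb{Z}$); just note that what Theorem~\ref{FAC.th} needs is normality of $\AffCone_H(X)$, not of $\AffCone_L(X)$, so you should add that projective normality with respect to $L$ passes to $H = dL$ by taking Veronese subrings of the integrally closed coordinate ring.

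The genuine problem is your handling of the smooth quadric surface. The claim that the cylinders $U_x$ fail to be $H$-polar when $H$ has bidegree $(a,b)$ with $a \ne b$ is incorrect, and it comes from reading the $H$-polarity condition too narrowly: the definition only requires $X \setminus U_x = \Supp D$ for \emph{some} effective divisor $D \sim dH$, not for a multiple of the reduced hyperplane section. Here $X \setminus U_x = X \cap \bb{T}_x X$ is the union of the two lines $l_1, l_2$ through $x$, of classes $(1,0)$ and $(0,1)$, so $D = a\,l_1 + b\,l_2$ is effective, has support exactly $X \setminus U_x$, and satisfies $D \sim H$. Hence every $U_x$ is $H$-polar for every very ample $H$, which is precisely how the paper disposes of this case. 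Your proposed repair by ruling cylinders is therefore unnecessary, and as stated it does not work: removing a single fibre of a ruling leaves $\bb{A}^1 \times \bb{P}^1$, which is not a cylinder in the paper's sense (the second factor must be affine), and any further carving out to reach a principal affine open enlarges the complement and destroys the claim that its support lies in a single ruling class; moreover, an open subset whose complement has class $(k,0)$ cannot be principal in the embedding given by $H$, since hypersurface sections in that embedding all have class proportional to $(a,b)$. Replace the third paragraph of your argument with the observation about $a\,l_1 + b\,l_2$ and the proof is complete.
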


\begin{proof}
Denote by $r$ the rank of the quadric $X$. If $(n, r) \ne (3, 4)$, then $\Pic(X) \cong \bb{Z}$ so the assertion immediately follows from Lemma~\ref{TCQ.le} and Theorem~\ref{FAC.th}. Assume that $(n, r) = (3, 4)$, that is, $X$ is a smooth quadric in $\bb{P}^3$. Then $X \cong \bb{P}^1 \times \bb{P}^1$ and there is a bidegree isomorphism
$$
\Pic(\bb{P}^1 \times \bb{P}^1) \cong \bb{Z}^2.
$$
The complement in $X$ of cylinders constructed in Lemma~\ref{CQ.le} are supported at a divisor in the class $(1, 1) \in \bb{Z}^2$. Therefore, since the classes of ample divisors on $X$ are of the form $(a, b) \in \bb{Z}^2$ with $a, b > 0$, the assertion again follows from Lemma~\ref{TCQ.le} and Theorem~\ref{FAC.th}.
\end{proof}

\begin{rema}
Let $X \subseteq \bb{P}^n$ be a quadric of rank at least three. The fact that the affine cone over this standard embedding is flexible also follows from flexibility of suspensions, see \cite[Section~5]{KZ99} and \cite[Theorem~3.2]{AZK12}.
\end{rema}


\section{Complements of quadrics in projective spaces}
\label{CQPS.se}


Recall that if $X \subseteq \bb{P}^n$ is a hypersurface, then the complement $\bb{P}^n \setminus X$ is an affine variety.

Let us present a construction of cylinders in the complement of a smooth quadric in a projective space. We utilize an argument, similar to the one from \cite[Lemma~5.21]{BPS24}.

\begin{lemm} \label{CCQ.le}
Let $X \subseteq \bb{P}^n$, $n \ge 2$ be a smooth quadric. Consider a point $x \in X$ and denote $Y = \bb{P}^n \setminus X$. Then the subset $Y \setminus \bb{T}_x X$ is a principal $(n - 1)$-cylinder in $Y$.
\end{lemm}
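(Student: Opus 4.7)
The plan is to pick coordinates adapted to $x$ via Lemma~\ref{CTSQ.le}, realize $Y \setminus \bb{T}_x X$ explicitly as a product inside a standard affine chart of $\bb{P}^n$, and then exhibit a regular function on the affine variety $Y$ whose non-vanishing locus is precisely $Y \setminus \bb{T}_x X$.

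First, I would apply Lemma~\ref{CTSQ.le} to choose homogeneous coordinates $x_0, \dots, x_n$ on $\bb{P}^n$ so that $x = (0 : 1 : 0 : \ldots : 0)$, $\bb{T}_x X = \bb{V}_{\bb{P}^n}(x_0)$, and $X = \bb{V}_{\bb{P}^n}(f)$ with $f = x_0 x_1 + g(x_2, \dots, x_n)$. In the affine chart $\bb{D}_{\bb{P}^n}(x_0) \cong \bb{A}^n$ with coordinates $x_1, \dots, x_n$ obtained by setting $x_0 = 1$, the quadric $X$ is cut out by $\restrict{f}{x_0 = 1} = x_1 + g(x_2, \dots, x_n)$. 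Consequently,
\[
Y \setminus \bb{T}_x X \;=\; \bb{D}_{\bb{P}^n}(x_0) \setminus X \;=\; \bb{A}^n \setminus \bb{V}_{\bb{A}^n}\bigl(x_1 + g(x_2, \dots, x_n)\bigr).
\]

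Next, to display the cylinder structure, I would introduce the new coordinate $u = x_1 + g(x_2, \dots, x_n)$. Since $\bb{K}[x_1, \dots, x_n] = \bb{K}[u, x_2, \dots, x_n]$, one obtains
\[
Y \setminus \bb{T}_x X \;\cong\; \Spec \bb{K}[u, u^{-1}, x_2, \dots, x_n] \;\cong\; \bb{G}_\mathrm{m} \times \bb{A}^{n - 1},
\]
which is an $(n-1)$-cylinder with base $Z = \bb{G}_\mathrm{m}$ and fiber coordinates $x_2, \dots, x_n$.

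Finally, I would verify principality in $Y$. Since $X$ is a hypersurface, $Y$ is affine, and the function $h = x_0^2 / f$ is a well-defined element of $\bb{K}[Y]$, being a degree-zero homogeneous rational function whose denominator is a power of the defining equation of $X$. Its non-vanishing locus is
\[
\bb{D}_Y(h) \;=\; Y \cap \bb{D}_{\bb{P}^n}(x_0) \;=\; Y \setminus \bb{T}_x X,
\]
so the cylinder is principal in $Y$. Honestly, no step in this argument is truly an obstacle once Lemma~\ref{CTSQ.le} has been invoked; the only point meriting care is the choice of $h$, where one must square the linear form $x_0$ to match the degree of $f$ and obtain a regular function on $Y$.
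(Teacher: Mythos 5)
Your proposal is correct and follows essentially the same route as the paper: choose coordinates via Lemma~\ref{CTSQ.le}, identify $Y \setminus \bb{T}_x X$ with $\bb{D}_{\bb{A}^n}\bigl(\restrict{f}{x_0 = 1}\bigr)$ in the chart $x_0 \ne 0$, split off the coordinate $u = x_1 + g$ to get $\bb{A}^1_* \times \bb{A}^{n-1}$, and exhibit principality via the regular function $x_0^2/f$ on $Y$. No gaps.
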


\begin{proof}
Let $x_0, \dots, x_n$ be coordinates in $\bb{P}^n$ as in Lemma~\ref{CTSQ.le}. Then
$$
Y \setminus \bb{T}_x X =
\bb{D}_Y \left( \frac{x_0^2}{f} \right) =
\bb{D}_{\bb{P}^n}(x_0 f) \cong
\bb{D}_{\bb{A}^n} \big( \restrict{f}{x_0 = 1} \big) \cong
\bb{A}^1_* \times \bb{A}^{n - 1},
$$
where the last isomorphism is given by the formulas
$$
\begin{array}{rcl}
(x_1, \dots, x_n) & \mapsto &
\big( f(1, x_1, \dots, x_n), \ x_2, \ \dots, \ x_n \big)
\\
\big( t - g(x_2, \dots, x_n), \ x_2, \ \dots, \ x_n \big) &
\mapsfrom & (t, x_2, \dots, x_n)
\end{array}
.
$$
\end{proof}

Let $X \subseteq \bb{P}^n$, $n \ge 2$ be a smooth quadric and denote $Y = \bb{P}^n \setminus X$. We want to show that cylinders constructed in Lemma~\ref{CCQ.le} define a transversal covering of $Y$. We treat separately the two cases according to the parity of $n$.

First consider the case of odd $n = 2 m - 1$ and choose coordinates $x_1, \dots, x_m, y_1, \dots, y_m$ in $\bb{P}^n$ such that $X$ is given by a quadratic form
$$
f = x_1 y_1 + \ldots + x_m y_m.
$$
We will write coordinates of a point in $\bb{P}^n$ in the following order:
$$
(x_1 : y_1 : x_2 : y_2 : \ldots : x_n : y_n).
$$
Fix an index $i \in \{1, \dots, m\}$. Consider a point $p_i \in X$ given by the equations
$$
y_i = 1, \quad x_i = 0, \insertText{and}
y_j = x_j = 0 \insertText{for} j \ne i.
$$
Then $\bb{T}_{p_i} X = \bb{V}_{\bb{P}^n}(x_i)$. Denote $U_i = Y \setminus \bb{T}_{p_i} X$ and
$$
\sigma_i =
x_1 y_1 + \ldots + \widehat{x_i y_i} + \ldots + x_m y_m =
f - x_i y_i.
$$
Then
$$
U_i = \bb{D}_{\bb{P}^n}(x_i f) \cong
\bb{D}_{\bb{A}^n}(\restrict{f}{x_i = 1}) \cong
\bb{A}^1_* \times \bb{A}^{n - 1},
$$
where the last isomorphism is given by the formulas
$$
\begin{array}{rcl}
(x_1, y_1, \dots, \hat{x}_i, y_i, \dots, x_m, y_m) &
\mapsto &
(\restrict{f}{x_i = 1}, x_1, y_1, \dots,
\hat{x}_i, \hat{y}_i, \dots, x_m, y_m)
\\
(x_1, y_1, \dots, \hat{x}_i, t - \sigma_i, \dots, x_m, y_m) &
\mapsfrom &
(t, x_1, y_1, \dots, \hat{x}_i, \hat{y}_i, \dots, x_m, y_m)
\end{array}
.
$$
Therefore, for each $t \in \bb{A}^1_*$ there is a fiber $A_i(t)$ of the cylinder $U_i$ consisting of points of the form
$$
(a_1 : b_1 : \ldots : a_{i - 1} : b_{i - 1} :
1 : t - \sigma_i : a_{i + 1} : b_{i + 1} : \ldots : a_m : b_m)
$$
for $a_j, b_j \in \bb{K}$.

\begin{lemm} \label{ODTC.le}
The collection $\cal{U} = \{U_1, \dots, U_m\}$ of cylinders in $Y$ is a transversal covering.
\end{lemm}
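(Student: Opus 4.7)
The plan is to verify that $\cal{U}$ covers $Y$ and that every nonempty $\cal{U}$-invariant subset $S \subseteq Y$ equals all of $Y$. Coverage is immediate: if every $x_i$ vanished at some point, then $f = \sum x_i y_i$ would vanish as well, placing the point on $X$ rather than on $Y$; hence every point of $Y$ lies in some $U_i$.

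For transversality, I take a nonempty invariant $S$ and, after relabeling, pick $p \in S \cap U_1$. Invariance forces $S$ to contain the fiber $A_1(t_p)$, where $t_p = f(p)/x_1(p)^2 \in \bb{A}^1_*$. The key step is to upgrade this to $S \supseteq A_1(t)$ for every $t \in \bb{A}^1_*$, which gives $U_1 \subseteq S$. For this I will use a short chain of two fibers. Inside $A_1(t_p)$ I pick the hinge point $p_2 = (1 : t_p : 1 : 0 : \ldots : 0)$, which also lies in $U_2$ with parameter $t_p$; so $S \supseteq A_2(t_p)$. This fiber contains the one-parameter family $q_\lambda = (\lambda : 0 : 1 : t_p : 0 : \ldots : 0)$. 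For $\lambda \in \bb{A}^1_*$, the point $q_\lambda$ lies in $U_1$ with parameter $t_p/\lambda^2$, and invariance yields $A_1(t_p/\lambda^2) \subseteq S$. Since $\bb{K}$ is algebraically closed, the map $\lambda \mapsto t_p/\lambda^2$ is surjective onto $\bb{A}^1_*$, so every fiber of $U_1$ lies in $S$ and thus $U_1 \subseteq S$.

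To finish, I promote $U_1 \subseteq S$ to $U_i \subseteq S$ for every $i \geq 2$. Any fiber $A_i(t)$ is parametrized freely by the pairs $(x_j, y_j)_{j \neq i}$, so I can choose $x_1 \neq 0$ to exhibit a point of $A_i(t)$ lying in $U_1 \subseteq S$; invariance then gives $A_i(t) \subseteq S$. Hence $S \supseteq \bigcup_i U_i = Y$.

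The main conceptual obstacle I anticipate is locating a two-fiber chain that actually alters the projective parameter $f/x_1^2$: this quantity appears coordinate-invariant at first, but sliding along the $U_2$-fiber through $p_2$ rescales $x_1$ freely, converting the parameter from $t_p$ to $t_p/\lambda^2$. Algebraic closure then upgrades \emph{rescaling by squares} into \emph{reaching every nonzero value}, which is precisely what transversality requires.
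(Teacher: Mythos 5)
Your proof is correct and follows essentially the same strategy as the paper: both arguments hop between fibers of $U_1$ and $U_2$, exploit the fact that projective renormalization of $x_1$ rescales the parameter $f/x_1^2$ by a square, and use that squaring is surjective on $\bb{K}^\times$ to reach every fiber of $U_1$; the final promotion to all $U_i$ is likewise the same in substance. Your chain of hinge points is marginally shorter than the paper's, but the mechanism is identical.
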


\begin{proof}
Clearly, for every point $p \in Y$ at least one of the coordinates $x_i$ of $p$ is nonzero. Therefore, $\cal{U}$ covers $Y$, so it remains to show that $\cal{U}$ is transversal. Consider a nonempty subset $S \subseteq Y$ which is invariant with respect to $\cal{U}$. Our goal is to show that $S = Y$. Take a point $p \in S$.

It is easy to see that there exists $i \in \{1, \dots, m\}$ such that both coordinates $x_i$ and $y_i$ of $p$ are nonzero. By symmetry, it suffices to consider the case of $i = 1$. We may assume that $x_1 = 1$ and $p \in A_1(t)$ for some $t \in \bb{A}^1_*$. By choosing another point in the fiber $A_1(t)$, we may assume that
$$
p = (1 : t : 0 : \ldots : 0).
$$
Consider a scalar $\lambda \in \bb{K}^\times$. By moving $p$ along $A_1(t)$, we conclude that $S$ also contains a point
$$
(1 : t : \lambda : 0 : \ldots : 0) =
\left( \frac{1}{\lambda} : \frac{t}{\lambda} : 1 : 0 : \ldots :
0 \right) \in A_2 \left( \frac{t}{\lambda^2} \right).
$$
Moving along $A_2(t \lambda^{-2})$, we obtain a point
$$
\left( 1 : \frac{t}{\lambda} : 1 :
\frac{t}{\lambda^2} - \frac{t}{\lambda} : 0 : \ldots :
0 \right) \in A_1 \left( \frac{t}{\lambda^2} \right),
$$
and moving along $A_1(t \lambda^{-2})$, we get a point
$$
\left( 1 : \frac{t}{\lambda^2} : 0 : \ldots : 0 \right).
$$
Therefore, $S$ contains all points of the form $(1 : t : 0 : \ldots : 0)$, where $t \in \bb{K}^\times$. So $S$ contains $U_1$. For each $i \in \{1, \dots, m\}$ we have $U_1 \cap U_i \ne \emptyset$, hence by symmetry $S$ contains $U_i$ for all $i \in \{1, \dots, m\}$. We conclude that $S = Y$.
\end{proof}

Now consider the case of even $n = 2 m$ and choose coordinates $x_1, \dots, x_m, y_1, \dots, y_m, z$ in $\bb{P}^n$ such that $X$ is given by a quadratic form
$$
f = x_1 y_1 + \ldots + x_m y_m + z^2.
$$
We will write coordinates of a point in $\bb{P}^n$ in the following order:
$$
(x_1 : y_1 : x_2 : y_2 : \ldots : x_n : y_n : z).
$$
Fix an index $i \in \{1, \dots, m\}$. Similarly to the case of odd $n$, consider a point $p_i \in X$ given by the equations
$$
y_i = 1, \quad x_i = 0, \quad
y_j = x_j = 0 \insertText{for} j \ne i, \insertText{and}
z = 0.
$$
Then $\bb{T}_{p_i} X = \bb{V}_{\bb{P}^n}(x_i)$. Denote $U_i = Y \setminus \bb{T}_{p_i} X$ and
$$
\sigma_i =
x_1 y_1 + \ldots + \widehat{x_i y_i} + \ldots + x_m y_m + z^2 =
f - x_i y_i.
$$
Then for each $t \in \bb{A}^1_*$ there is a fiber $A_i(t)$ of the cylinder $U_i$ consisting of points of the form
$$
(a_1 : b_1 : \ldots : a_{i - 1} : b_{i - 1} :
1 : t - \sigma_i :
a_{i + 1} : b_{i + 1} : \ldots : a_m : b_m : c)
$$
for $a_j, b_j, c \in \bb{K}$.

We will also require another similar collection of cylinders. Namely, define points $q_i \in X$ and cylinders $V_i$ in $Y$ by interchanging $x_i$ with $y_i$ in the definitions above. Then for each $t \in \bb{A}^1_*$ there is a fiber $B_i(t)$ of the cylinder $V_i$ consisting of points of the form
$$
(a_1 : b_1 : \ldots : a_{i - 1} : b_{i - 1} :
t - \sigma_i : 1 :
a_{i + 1} : b_{i + 1} : \ldots : a_m : b_m : c)
$$
for $a_j, b_j, c \in \bb{K}$.

Consider a point $r = (0 : \ldots : 0 : 1) \in Y$. Note that $r \not\in U_i \cup V_i$ for all $1 \le i \le m$, hence we will need at least one more cylinder. Let $q \in X$ be a point such that $r \not \in \bb{T}_q X$: for example, one can take $q = (0 : \ldots 0 : -1 : 1 : 1)$. Denote $W = Y \setminus \bb{T}_q X$.

\begin{lemm} \label{EDTC.le}
The collection $\cal{U} = \{U_1, V_1, \dots, U_m, V_m, W\}$ of cylinders in $Y$ is a transversal covering.
\end{lemm}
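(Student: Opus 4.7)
The plan is to mirror the proof of Lemma~\ref{ODTC.le}, with the additional coordinate $z$ playing no role in the main combinatorial moves and the new cylinder $W$ invoked only to handle the outlier point $r = (0 : \ldots : 0 : 1)$. First I would verify coverage: any $p \in Y$ with some nonzero $x_i$ lies in $U_i$, any with some nonzero $y_i$ lies in $V_i$, and the only remaining point is $r$, which belongs to $W$ by the choice of $q$.

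For transversality, let $S \subseteq Y$ be a nonempty subset invariant with respect to $\cal{U}$. I would first rule out $S = \{r\}$ by observing that the fiber of $W$ through $r$ has positive dimension $n - 1 \ge 1$ and must lie in $S$ by $W$-invariance; hence $S$ contains a point with some nonzero $x_i$ or $y_i$. Relabeling and using $V_i$ in place of $U_i$ if necessary, I may assume this point has $x_1 \ne 0$; after a preliminary move inside an $A_1$-fiber I may further suppose it equals $(1 : t : 0 : \ldots : 0 : 0)$ for some $t \in \bb{K}^\times$. The three-step trajectory of Lemma~\ref{ODTC.le} now applies essentially verbatim: setting $z = 0$ throughout, the $z^2$-contribution to $\sigma_1, \sigma_2$ vanishes, and successive moves along $A_1(t)$, $A_2(t/\lambda^2)$, $A_1(t/\lambda^2)$ produce $(1 : t/\lambda^2 : 0 : \ldots : 0 : 0) \in S$ for every $\lambda \in \bb{K}^\times$. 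Hence every fiber $A_1(s)$ meets $S$, so $U_1 \subseteq S$, and by the same argument together with fiber intersections in the overlaps $U_1 \cap U_i$ and $U_1 \cap V_i$, each $U_i$ and $V_i$ lies in $S$.

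Finally, since $Y \setminus \bigcup_i (U_i \cup V_i) \subseteq \{r\}$, every positive-dimensional fiber of $W$ meets $\bigcup_i (U_i \cup V_i) \subseteq S$, so $W$-invariance of $S$ forces $W \subseteq S$ and $S = Y$. The main obstacle is structural rather than computational: the combinatorial moves copy the odd case, but the cylinder $W$ plays a dual role, indispensable both at the beginning (to prevent $\{r\}$ from being a spurious invariant singleton) and at the end (to sweep $r$ back into $S$). The $z$-coordinate, despite enlarging the ambient dimension, enters only through the squared term $z^2$ in $\sigma_i$ and can be harmlessly set to zero throughout the key moves.
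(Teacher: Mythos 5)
Your handling of $W$ (ruling out $\{r\}$ as an invariant set at the start and sweeping $r$ back in at the end) is fine and is essentially equivalent to the paper's reduction to $Y' = Y \setminus \{r\}$ and $\cal{U}' = \cal{U} \setminus \{W\}$. The gap is in the claim that the odd-case moves apply ``essentially verbatim'' with $z = 0$. That trajectory routes through the second hyperbolic pair: it moves along $A_1(t)$ into the $x_2$-direction, rescales to land in a fiber of $U_2$, and comes back. This requires $m \ge 2$. In the even case the lemma also covers $m = 1$, i.e.\ the smooth conic $f = x_1 y_1 + z^2$ in $\bb{P}^2$, where there is no $U_2$ and no $(x_2, y_2)$ to pass through; with $z$ frozen at $0$ the fiber $A_1(t) = \{(1 : t - c^2 : c)\}$ degenerates to the single point $(1 : t : 0)$, and your walk produces nothing. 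This is not a dismissible corner case: in Proposition~\ref{TCCQ.pr} a rank-three quadric is treated as a cone over a smooth conic in $\bb{P}^2$, and rank-three quadrics do arise as $\phi(E)$ in Section~\ref{SCITQ.se}, so the $m = 1$ instance of Lemma~\ref{EDTC.le} is actually used.

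The paper's proof avoids this by making the $z$-coordinate do the work rather than setting it to zero: starting from $(1 : 0 : \ldots : 0 : \lambda)$ it moves along $A_1(\lambda^2)$ to a point with $z = 2\lambda$, rescales to land in $B_1\bigl(1/(9\lambda^2)\bigr)$, varies $z = c$ there, and returns to a fiber of $U_1$, showing that $(1 : 0 : \ldots : 0 : b)$ is reached for every $b \in \bb{K}^\times$. This uses only $U_1$, $V_1$ and the $z$-coordinate, so it is uniform in $m \ge 1$. To repair your argument you would either need to adopt such a $z$-using walk, or give a separate treatment of the $m = 1$ case; for $m \ge 2$ your reduction to the odd-case computation is correct.
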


\begin{proof}
Denote $Y' = Y \setminus \{r\}$ and $\cal{U}' = \cal{U} \setminus \{W\}$. Clearly, for every point $p \in Y'$ at least one of the coordinates $x_i$ or $y_i$ of $p$ is nonzero. Therefore, $\cal{U}'$ covers $Y'$. It follows that $\cal{U}$ covers $Y$. We conclude that it remains to show that $\cal{U}'$ is a transversal covering of $Y'$. Consider a nonempty subset $S \subseteq Y'$ which is invariant with respect to $\cal{U}'$. Our goal is to show that $S = Y'$. Take a point $p \in S$.

Consider $i \in \{1, \dots, m\}$ such that either coordinate $x_i$ or $y_i$ of $p$ is nonzero. By symmetry, we can assume that $i = 1$ and $x_1 = 1$. Then $p \in A_1(t)$ for some $t \in \bb{A}^1_*$, so after moving along the fiber $A_1(t)$ we may assume that
$$
p = (1 : 0 : \ldots : 0 : \lambda),
$$
where $\lambda^2 = t$.

First, let us show that $S$ contains a point $s_\mu = (1 : 0 : \ldots : 0 : \mu)$ for every $\mu \in \bb{K}^\times$. Move $p$ along $A_1(\lambda^2)$ to get
$$
(1 : -3 \lambda^2 : 0 : \ldots : 0 : 2 \lambda) =
\left(
    -\frac{1}{3 \lambda^2} : 1 : 0 : \ldots : 0 :
    -\frac{2}{3 \lambda}
\right) \in
B_1 \left( \frac{1}{9 \lambda^2} \right).
$$
Consider a scalar $c \in \bb{K}$ such that $c^2 \ne \frac{1}{9 \lambda^2}$. By moving $p$ along $B_1 \big( (9 \lambda^2)^{-1} \big)$, we obtain a point
$$
\left(
    \frac{1 - 9 c^2 \lambda^2}{9 \lambda^2} : 1 : 0 : \ldots :
    0 : c
\right) =
\left(
    1 : \frac{9 \lambda^2}{1 - 9 c^2 \lambda^2} : 0 : \ldots :
    0 : \frac{9 c \lambda^2}{1 - 9 c^2 \lambda^2}
\right),
$$
which can be moved by a fiber of $U_1$ to a point $(1 : 0 : \ldots : 0 : b)$, where
$$
b^2 =
\frac{9 \lambda^2}{1 - 9 c^2 \lambda^2} +
\left( \frac{9 c \lambda^2}{1 - 9 c^2 \lambda^2} \right)^2 =
\frac{
    9 \lambda^2
}{
    (1 - 9 c^2 \lambda^2)^2
}
.
$$
The right-hand side is a function of $c$ with image equal to $\bb{K}^\times$, hence $b$ can be an arbitrary nonzero scalar.

Since $s_\mu \in S$ for all $\mu \in \bb{K}^\times$, we conclude that $U_1 \subseteq S$. By symmetry we have $U_i \cup V_i \subseteq S$ for all $1 \le i \le m$, hence $S = Y'$ as required.
\end{proof}

Now we construct a transversal covering by cylinders of the complement of an arbitrary quadric of rank at least three. For this we present a bit more general argument.

We call two linear subspaces $K, L \subseteq \bb{P}^n$ complementary if $K \cap L = \emptyset$ and
$$
n = \dim K + \dim L + 1.
$$
We say that coordinates $x_0, \dots, x_n$ in $\bb{P}^n$ are compatible with $K$ and $L$ if $x_0, \dots, x_{\dim K}$ are coordinates in $K$ and $x_{\dim K + 1}, \dots, x_n$ are coordinates in $L$. In other words,
$$
K = \bb{V}_{\bb{P}^n}(x_{\dim K + 1}, \dots, x_n)
\insertText{and}
L = \bb{V}_{\bb{P}^n}(x_0, \dots, x_{\dim K}).
$$

\begin{defi}
Consider a projective space $\bb{P}^n$, complementary subspaces $K, L \subseteq \bb{P}^n$ and a projective variety $X_0 \subseteq K$. For each $x \in X_0$ denote $L_x = \gen{x, L}_{\bb{P}^n}$. The projective variety $X = \bigcup_{x \in X_0} L_x$ is called a \emph{projective cone} with a \emph{base} $X_0$ and a \emph{vertex} $L$.
\end{defi}

Note that if $x_0, \dots, x_n$ are coordinates in $\bb{P}^n$ compatible with $K$ and $L$ and $X_0 \subseteq K$ is given by equations $h_1 = \ldots = h_m = 0$ for some homogeneous polynomials $h_i \in \bb{K}[x_0, \dots, x_{\dim K}]$, then the projective cone $X$ is given in $\bb{P}^n$ by the same equations $h_1 = \ldots = h_m = 0$.

Let $K$ and $L$ be two complementary linear subspaces in $\bb{P}^n$. For a closed subset $W \subseteq K$ denote by $\wh{W}$ the projective cone with a base $W$ and a vertex $L$. For an open subset $U \subseteq K$ denote $\hat{U} = \bb{P}^n \setminus \wh{W}$, where $W = K \setminus U$.

Denote $\dim K = k$, $\dim L = l$ and choose coordinates $x_0, \dots, x_k, y_0, \dots, y_l$ in $\bb{P}^n$ compatible with $K$ and $L$. Let $H \subseteq K$ be a hyperplane given by a linear form $h \in \bb{K}[x_0, \dots, x_k]$. Assume that $U \subseteq K$ is an open subset such that $H \subseteq K \setminus U$. Then $\hat{U}$ is an $(l + 1)$-cylinder, namely
$$
\begin{array}{rcl}
\hat{U} & \cong & U \times \bb{A}^{l + 1}
\\
(x_0 : \ldots : x_k : y_0 : \ldots : y_l) & \mapsto &
\left(
    (x_0 : \ldots : x_k),
    \left( \frac{y_0}{h}, \dots, \frac{y_l}{h} \right)
\right)
\\
(x_0 : \ldots : x_k : h t_0 : \ldots : h t_l) & \mapsfrom &
\big( (x_0 : \ldots : x_k), (t_0 : \ldots : t_l) \big)
\end{array}
.
$$

\begin{lemm} \label{TCCC.le}
Let $X_0 \subseteq K$ be a projective variety. Denote $Y_0 = K \setminus X_0$ and $X = \hat{X}_0$. Assume that there is a transversal covering $\cal{U} = \{U_i\}_{i \in I}$ of $Y_0$ by cylinders such that the complement $K \setminus U_i$ contains a hyperplane in $K$ for all $i \in I$. Then $\hat{\cal{U}} = \{\hat{U}_i\}_{i \in I}$ is a transversal covering of $Y = \bb{P}^n \setminus X$.
\end{lemm}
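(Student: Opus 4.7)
The plan is to translate everything to the base $K$ through the linear projection $\pi \colon \bb{P}^n \setminus L \to K$ from the vertex $L$, and then invoke the transversality of $\cal{U}$ already on $Y_0$.

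First I would establish the basic identification. For any closed $W \subseteq K$ the cone decomposes as $\wh{W} = L \cup \pi^{-1}(W)$, hence $\bb{P}^n \setminus \wh{W} = \pi^{-1}(K \setminus W)$. Applied to $X_0$ and to each $K \setminus U_i$ this gives
$$
Y = \pi^{-1}(Y_0), \qquad \hat{U}_i = \pi^{-1}(U_i) \quad (i \in I),
$$
from which the covering property is immediate: $\bigcup_i \hat{U}_i = \pi^{-1}\bigl(\bigcup_i U_i\bigr) = \pi^{-1}(Y_0) = Y$.

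Next I would combine the cylinder structures. Writing $U_i \cong \bb{A}^{k_i} \times Z_i$ with cylinder projection $\pi_i \colon U_i \to Z_i$, the cone-compatible isomorphism $\hat{U}_i \cong U_i \times \bb{A}^{l+1}$ from the construction preceding the lemma upgrades $\hat{U}_i$ to a $(k_i + l + 1)$-cylinder
$$
\hat{U}_i \cong \bb{A}^{k_i + l + 1} \times Z_i,
$$
whose cylinder projection $\hat{\pi}_i \colon \hat{U}_i \to Z_i$ factors as $\hat{\pi}_i = \pi_i \circ \pi|_{\hat{U}_i}$; this is visible directly from the explicit formulas, since the first-factor projection of the cone isomorphism is the restriction of $\pi$.

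Finally, the transversality step. Let $S \subseteq Y$ be nonempty and invariant with respect to $\hat{\cal{U}}$, and set $T = \pi(S) \subseteq Y_0$. Using $\hat{U}_i = \pi^{-1}(U_i)$, one checks $T \cap U_i = \pi(S \cap \hat{U}_i)$. The invariance of $S$ gives $S \cap \hat{U}_i = \hat{\pi}_i^{-1}(A_i)$ with $A_i = \hat{\pi}_i(S \cap \hat{U}_i) \subseteq Z_i$; applying $\pi$ and using the factorisation $\hat{\pi}_i = \pi_i \circ \pi$ turns this into $T \cap U_i = \pi_i^{-1}(A_i)$. Hence $T$ is invariant with respect to $\cal{U}$, and since $T$ is nonempty the transversality of $\cal{U}$ forces $T = Y_0$. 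Then $A_i = Z_i$ for every $i$, so $\hat{U}_i \subseteq S$ for every $i$, and the covering property yields $S = Y$.

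The only mildly technical point—and the main bridge of the whole argument—is the compatibility $\hat{\pi}_i = \pi_i \circ \pi$ between the two cylinder projections. It is essentially built into the cone construction, but it is exactly what allows invariance downstairs (on $Y_0$) to be read off from invariance upstairs (on $Y$), and without it the translation between $\cal{U}$ and $\hat{\cal{U}}$ does not go through.
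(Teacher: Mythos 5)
Your proof is correct and follows essentially the same route as the paper: the paper also decomposes $Y = \bigsqcup_{x \in Y_0} L'_x$ via the projection from the vertex $L$, observes that the fibers of $\hat{U}_i$ contain both the punctured cones $L'_x$ and the fibers of $U_i$, and descends an invariant set on $Y$ to an invariant set on $Y_0$ (the paper uses $S \cap Y_0$ where you use $T = \pi(S)$, which agree once invariance is in play). Your explicit identification of $\hat{U}_i$ as a $(k_i + l + 1)$-cylinder with projection $\hat{\pi}_i = \pi_i \circ \pi$ is exactly the compatibility the paper's "moving along fibers" argument relies on, just spelled out more carefully.
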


\begin{proof}
For $x \in K$ denote $L'_x = L_x \setminus L$, where $L_x = \gen{x, L}_{\bb{P}^n}$. For every point $p \in \bb{P}^n \setminus L$ there exists a unique point $x \in K$ such that $p \in L'_x$, and, moreover, for distinct $x, y \in K$ the subsets $L'_x$ and $L'_y$ of $\bb{P}^n$ are disjoint. We can write
$$
Y = \bigsqcup_{x \in Y_0} L'_x \insertText{and}
\hat{U}_i = \bigsqcup_{x \in U_i} L'_x
\insertText{for all} i \in I.
$$
In particular, it follows that $\hat{\cal{U}}$ is a covering of $Y$.

Consider a nonempty subset $S \subseteq Y$, which is invariant with respect to $\hat{\cal{U}}$. Take a point $p \in S$ and let $x \in Y_0$ be such that $p \in L'_x$. By moving $p$ along an $\bb{A}^{l + 1}$-fiber of a cylinder in $\hat{\cal{U}}$, we see that $x \in L'_x \subseteq S$. By moving $x$ along a fiber of a cylinder in $\cal{U}$, we conclude that $Y_0 \subseteq S$. It follows that $S = Y$.
\end{proof}

Now we are ready to state the main results of this section.

\begin{prop} \label{TCCQ.pr}
Let $X \subseteq \bb{P}^n$ be a quadric of rank at least three. Then the complement $Y = \bb{P}^n \setminus X$ admits a transversal covering by principal cylinders.
\end{prop}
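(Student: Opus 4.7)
The proof splits naturally into two cases according to whether $X$ is smooth or singular.

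If $X$ is smooth, its rank equals $n + 1$, and depending on the parity of $n$ I would invoke Lemma~\ref{ODTC.le} or Lemma~\ref{EDTC.le} directly. These lemmas produce transversal coverings of $Y$ by cylinders of the form $Y \setminus \bb{T}_x X$, each of which is a principal cylinder in $Y$ by Lemma~\ref{CCQ.le}. (The auxiliary cylinder $W$ appearing in the even case is likewise of this form, hence also principal.)

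If $X$ is singular of rank $r \le n$, then in suitable coordinates $X$ is the projective cone with vertex a linear subspace $L \subseteq \bb{P}^n$ of dimension $n - r$ over a smooth quadric $X_0 \subseteq K \cong \bb{P}^{r-1}$, where $K$ and $L$ are complementary. Since $r \ge 3$ we have $r - 1 \ge 2$, so the smooth-quadric case applies to $X_0$: Lemma~\ref{ODTC.le} or Lemma~\ref{EDTC.le} (depending on the parity of $r - 1$) supplies a transversal covering $\cal{U} = \{U_i\}_{i \in I}$ of $Y_0 = K \setminus X_0$ by principal cylinders. Moreover, by construction each $U_i$ has the form $Y_0 \setminus \bb{T}_{p_i} X_0$ for some smooth point $p_i$ of $X_0$, so $K \setminus U_i$ contains the hyperplane $\bb{T}_{p_i} X_0 \subseteq K$. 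This is precisely the hypothesis of Lemma~\ref{TCCC.le}, which then yields a transversal covering $\hat{\cal{U}} = \{\hat{U}_i\}_{i \in I}$ of $Y = \bb{P}^n \setminus X$ by cylinders.

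It remains to verify that each $\hat{U}_i$ is \emph{principal} in $Y$. The complement of $U_i$ in $K$ equals $X_0 \cup \bb{T}_{p_i} X_0$, so passing to projective cones gives $\bb{P}^n \setminus \hat{U}_i = X \cup H_i$, where $H_i = \widehat{\bb{T}_{p_i} X_0}$ is a hyperplane in $\bb{P}^n$. Writing $X = \bb{V}_{\bb{P}^n}(f)$ with $\deg f = 2$ and $H_i = \bb{V}_{\bb{P}^n}(l_i)$ for a linear form $l_i$, the rational function $l_i^2 / f$ is regular on $Y$ and vanishes precisely on $H_i \cap Y$, so $\hat{U}_i = \bb{D}_Y(l_i^2 / f)$ is principal.

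The only step requiring genuine care is the singular case: one must set up the coordinates so that $X$ decomposes as the projective cone $\hat{X}_0$, check that the covering produced in Section~\ref{CQPS.se} for the smooth quadric $X_0$ really satisfies the hyperplane hypothesis of Lemma~\ref{TCCC.le}, and finally read off the defining function for each lifted cylinder. The smooth case is immediate from the earlier lemmas, and the reduction of the singular case to the smooth one is then routine.
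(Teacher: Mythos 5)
Your proposal is correct and follows essentially the same route as the paper: smooth case via Lemmas~\ref{ODTC.le} and~\ref{EDTC.le}, singular case by viewing $X$ as a projective cone over a smooth quadric and applying Lemma~\ref{TCCC.le}. You even spell out the principality of the lifted cylinders $\hat{U}_i = \bb{D}_Y(l_i^2/f)$, which the paper dismisses with ``one can easily see.''
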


\begin{proof}
If $X$ is smooth, then the assertion was proved in Lemmas~\ref{ODTC.le} and~\ref{EDTC.le}. Assume that $X$ is singular and denote by $r$ the rank of $X$. Then $X$ is a projective cone with a base $X_0$, where $X_0$ is a smooth quadric in $\bb{P}^{r - 1}$. One can easily see that the cylinders $\hat{U}_i$, constructed in Lemma~\ref{TCCC.le}, are principal provided that the cylinders $U_i$ are principal. Therefore, the assertion follows from Lemma~\ref{TCCC.le}.
\end{proof}

\begin{theo} \label{CQF.th}
Let $X \subseteq \bb{P}^n$ be a quadric of rank at least three. Then the complement $Y = \bb{P}^n \setminus X$ is flexible.
\end{theo}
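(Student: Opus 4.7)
The plan is to assemble the pieces already built in the preceding sections: by Proposition~\ref{TCCQ.pr} the complement $Y = \bb{P}^n \setminus X$ of a quadric of rank at least three admits a transversal covering by principal cylinders, and by Lemma~\ref{TCIF.le} any affine variety admitting such a covering is smooth and flexible. So essentially nothing new needs to be proved; one only has to verify that the hypotheses of Lemma~\ref{TCIF.le} apply to $Y$.

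First, I would point out that since $X \subseteq \bb{P}^n$ is a hypersurface, its complement $Y = \bb{P}^n \setminus X$ is affine, as recalled at the start of Section~\ref{CQPS.se}. This places $Y$ in the setting where Lemma~\ref{TCIF.le} is applicable.

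Next, I would invoke Proposition~\ref{TCCQ.pr} to obtain a transversal covering $\cal{U} = \{U_i\}_{i \in I}$ of $Y$ by principal cylinders. With this data at hand, a direct application of Lemma~\ref{TCIF.le} gives both the smoothness and the flexibility of $Y$, completing the proof.

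There is no genuine obstacle at this stage: the real work was already carried out in Lemmas~\ref{ODTC.le} and~\ref{EDTC.le} (handling the smooth odd- and even-dimensional cases), in Lemma~\ref{TCCC.le} (propagating transversal coverings through projective cones to reduce the singular case to the smooth one), and in Proposition~\ref{TCCQ.pr} which packages these results. Consequently, the proof of Theorem~\ref{CQF.th} reduces to a two-line citation.
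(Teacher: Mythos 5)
Your proposal is correct and follows essentially the same route as the paper: the paper's proof is the one-line ``Combine Propositions~\ref{CF.pr} and~\ref{TCCQ.pr}'', and since the relevant direction of Proposition~\ref{CF.pr} is itself proved by the argument of Lemma~\ref{TCIF.le}, your citation of Lemma~\ref{TCIF.le} (together with the observation that $Y$ is affine as the complement of a hypersurface) is an equivalent packaging of the same argument.
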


\begin{proof}
Combine Propositions~\ref{CF.pr} and~\ref{TCCQ.pr}.
\end{proof}

\begin{rema}
One can also deduce flexibility of the complement $Y = \bb{P}^n \setminus X$ of a smooth quadric $X \subseteq \bb{P}^n$, $n \ge 2$ from the following observation. A semisimple algebraic group $G = \SO{n + 1}$ acts on $Y$ transitively. By~\cite[Lemma~1.1]{Po11} the group $G$ is generated by its $\Ga$-subgroups. Therefore, $Y$ is flexible by Theorem~\ref{FIT.th}.
\end{rema}


\section{Smooth complete intersections of two quadrics}
\label{SCITQ.se}


In this section we prove flexibility of affine cones over a smooth complete intersection of two quadrics in $\bb{P}^{n + 2}$, $n \ge 3$.

Let $X$ be a smooth complete intersection of two quadrics $Q_1$ and $Q_2$ in $\bb{P}^{n + 2}$, $n \ge 3$. Recall that $X$ is a Fano variety with Picard number $1$ and index $n - 1$.

First, let us recall a construction from \cite[Lemma~3.2]{CPPZ21} that provides open subsets in $X$ isomorphic to the complement of a quadric in $\bb{P}^n$. For a line $l$ in $X$ we will denote by $D_l(X)$ a prime divisor in $X$ swept out by lines in $X$ meeting $l$. Fix a line $l \subseteq X$ and denote $D = D_l(X)$. Let $\sigma \colon \tilde{X} \to X$ be the blowup of $l$ and $\psi \colon X \dashrightarrow \bb{P}^n$ be the projection from $l$. Then there exists a commutative diagram
$$
\begin{tikzcd}[column sep=small]
 & & & \tilde{D} \ar[phantom, "\subset"]{r} &
\tilde{X} \ar{lld}{\sigma} \ar{rrd}{\phi} &
E \ar[phantom, "\supset"]{l}
\\
l \ar[phantom, "\subset"]{r} & D \ar[phantom, "\subset"]{r} &
X \ar[dashed]{rrrr}{\psi} & & & &
\bb{P}^n & \phi(E) \ar[phantom, "\supset"]{l}
\end{tikzcd}
,
$$
where $\tilde{D}$ is the proper transform of $D$ and $E$ is the exceptional divisor of $\sigma$. In other words, $\phi$ is a resolution of indeterminacy of $\psi$. The map $\psi$ is birational and it induces an isomorphism $X \setminus D \cong \bb{P}^n \setminus \phi(E)$. Moreover, $\phi(E) \subseteq \bb{P}^n$ is a quadric of rank $r \ge 3$. In fact, $\phi(E)$ contains a one-parameter family of linear subspaces of dimension $n - 2$. By \cite[Section~1.1]{Re72} the maximal dimension of a linear subspace contained in $\phi(E)$ is equal to
$$
n + 1 - r + \left[ \frac{r - 2}{2} \right] =
n - r + \left[ \frac{r}{2} \right] =
n - \left\lceil \frac{r}{2} \right\rceil.
$$
Therefore,
$$
n - 2 \le n - \left\lceil \frac{r}{2} \right\rceil,
\insertText{so} r \le 4.
$$

Consider a vector space $V$ over $\bb{K}$ such that $\bb{P}^{n + 2} = \bb{P}(V)$. Choose two quadratic forms $\beta$ and $\gamma$ on $V$ such that $Q_1$ and $Q_2$ are given by equations $\beta = 0$ and $\gamma = 0$ respectively. By abuse of notation, we denote by the same letters $\beta$ and $\gamma$ the corresponding symmetric bilinear forms on $V$. Note that for a point $p \in X$ the projective tangent space $\bb{T}_p X \subseteq \bb{P}^{n + 2}$ is given by equations
$$
\beta(p, x) = \gamma(p, x) = 0.
$$

\begin{lemm} \label{LSX.le}
Consider a subset $W \subseteq \bb{P}^{n + 2}$. Then $\gen{W}_{\bb{P}^{n + 2}} \subseteq X$ if and only if
$$
\beta(w_1, w_2) = \gamma(w_1, w_2) = 0
$$
for all $w_1, w_2 \in W$.
\end{lemm}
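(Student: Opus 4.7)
\medskip

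\textbf{Proof plan.} The statement is a standard linear-algebra fact about common zero loci of quadratic forms, so the plan is essentially to unwind the definitions and exploit bilinearity in both directions.

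For the $(\Leftarrow)$ direction, I would work with vector representatives in $V$. Assume that $\beta(w_1,w_2)=\gamma(w_1,w_2)=0$ for all $w_1,w_2 \in W$. Any point of $\gen{W}_{\bb{P}^{n+2}}$ is of the form $[v]$ with $v = \sum_i a_i w_i$ a finite $\bb{K}$-linear combination of lifts of elements of $W$. By symmetric bilinearity of $\beta$,
\begin{equation*}
\beta(v,v) = \sum_{i,j} a_i a_j \beta(w_i, w_j) = 0,
\end{equation*}
so $[v]\in Q_1$; the identical argument with $\gamma$ gives $[v]\in Q_2$, so $[v]\in X$.

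For the $(\Rightarrow)$ direction, suppose $\gen{W}_{\bb{P}^{n+2}}\subseteq X$ and fix $w_1,w_2\in W$ with vector lifts still denoted $w_1,w_2$. The projective line $\gen{w_1,w_2}_{\bb{P}^{n+2}}$ lies in $X$, hence for every $t\in\bb{K}$ the vector $w_1+tw_2$ represents a point of $Q_1$. Expanding,
\begin{equation*}
0 = \beta(w_1+tw_2,\,w_1+tw_2) = \beta(w_1,w_1) + 2t\,\beta(w_1,w_2) + t^2\,\beta(w_2,w_2).
\end{equation*}
Since this vanishes identically in $t$ and $\operatorname{char}\bb{K}=0$, every coefficient vanishes, so in particular $\beta(w_1,w_2)=0$; the identical computation with $\gamma$ gives $\gamma(w_1,w_2)=0$. (If $w_1=w_2$ as projective points one takes a single lift and the conclusion $\beta(w_1,w_1)=\gamma(w_1,w_1)=0$ is just the statement $w_1\in X$.)

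There is no real obstacle here: the whole content is the passage from the \emph{quadratic} conditions $\beta(v,v)=\gamma(v,v)=0$ cutting out $X$ to the \emph{bilinear} conditions on pairs, which is done by the standard polarization argument in the line through $w_1$ and $w_2$. The only point to be mildly careful about is to argue at the level of vector representatives in $V$ rather than projective points, so that linear combinations make sense; I would state this convention explicitly at the beginning of the proof.
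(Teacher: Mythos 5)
Your proof is correct and follows essentially the same route as the paper: choose vector lifts, expand by bilinearity for the reverse direction, and polarize for the forward direction. The only cosmetic difference is that the paper evaluates $\beta$ at the single vector $\tilde{w}_1+\tilde{w}_2$ (after first noting the diagonal terms vanish), whereas you expand along the whole pencil $w_1+tw_2$ and compare coefficients in $t$; both are the standard polarization argument.
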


\begin{proof}
For each $w \in W$ choose a vector $\tilde{w} \in V$ that spans the line $w$.

Assume that $\gen{W}_{\bb{P}^{n + 2}} \subseteq X$. Then clearly $\beta(w, w) = \gamma(w, w) = 0$ for all $w \in W$. Consider two elements $w_1, w_2 \in W$ and denote by $u$ the element of $\gen{W}_{\bb{P}^{n + 2}}$ spanned by a vector $\tilde{u} = \tilde{w}_1 + \tilde{w}_2$. Then
$$
0 = \beta(u, u) =
\beta(\tilde{w}_1 + \tilde{w}_2, \tilde{w}_1 + \tilde{w}_2) =
2 \beta(\tilde{w}_1, \tilde{w}_2),
$$
hence $\beta(w_1, w_2) = 0$. The argument for $\gamma$ is identical.

Assume the converse. Consider $u \in \gen{W}_{\bb{P}^{n + 2}}$ spanned by a vector
$$
\tilde{u} = \lambda_1 \tilde{w}_1 + \ldots +
\lambda_k \tilde{w}_k
$$
for some $w_1, \dots, w_k \in W$ and $\lambda_1, \dots, \lambda_k \in \bb{K}$. Then
$$
\beta(\tilde{u}, \tilde{u}) = \beta \left(
\sum_{i = 1}^k \lambda_i \tilde{w}_i, \
\sum_{i = 1}^k \lambda_i \tilde{w}_i
\right) =
\sum_{i, j = 1}^k \lambda_i \lambda_j
\beta(\tilde{w}_i, \tilde{w}_j) = 0,
$$
so $u \in Q_1$. The argument for $Q_2$ is identical.
\end{proof}

Consider a point $p \in X$. Denote by $S_p(X)$ the union of lines in $X$ containing $p$. We claim that
$$
S_p(X) = \bb{T}_p X \cap X.
$$
The inclusion $S_p(X) \subseteq \bb{T}_p X \cap X$ is clear. Conversely, let $x \in \bb{T}_p X \cap X$. Then $x \in S_p(X)$ by Lemma~\ref{LSX.le}. In particular, $\dim S_p(X) \ge 1$, hence for every point $p \in X$ there exists a line in $X$ containing $p$.

Denote by $L(X)$ the set of lines in $X$, that is, a closed subset in $\Gr{2}{n + 3}$. For a point $p \in X$ denote by $L_p(X)$ the set of lines in $X$ containing $p$, that is, a closed subset in $L(X)$. Clearly, $\dim L_p(X) = \dim S_p(X) - 1$.

For a line $l$ in $X$ consider an open subset $U_l(X) = X \setminus D_l(X)$ in $X$. Denote
$$
\cal{U} = \{U_l(X)\}_{l \in L(X)}.
$$

\begin{lemm} \label{CX.le}
The collection $\cal{U}$ is a covering of $X$.
\end{lemm}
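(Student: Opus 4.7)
The plan is to find, for each $p \in X$, a line $l \in L(X)$ with $l \cap \bb{T}_p X = \emptyset$. This suffices, because $l \subseteq X$ and $S_p(X) = \bb{T}_p X \cap X$ give $l \cap \bb{T}_p X = l \cap S_p(X)$, while $p \in D_l(X)$ is equivalent to the existence of a line through $p$ in $X$ meeting $l$, which in turn is equivalent to $l \cap S_p(X) \neq \emptyset$.

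I would argue by contradiction, assuming every line in $X$ meets $\bb{T}_p X$. Let $\pi \colon \bb{P}^{n+2} \dashrightarrow \bb{P}^1$ be the linear projection from the codimension-$2$ linear subspace $\bb{T}_p X$, and for $t \in \bb{P}^1$ let $H_t$ be the corresponding hyperplane through $\bb{T}_p X$, with $Y_t = X \cap H_t$. The geometric observation I would use is that any line $l \subseteq \bb{P}^{n+2}$ meeting $\bb{T}_p X$ in a single point is contracted by $\pi$, i.e. $l$ is contained in a unique $H_t$. On the dense open set $L' = L(X) \setminus L(S_p(X))$ this yields a morphism $\bar{\pi} \colon L' \to \bb{P}^1$, $l \mapsto t$. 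Using that $L(X)$ is irreducible of dimension $2n - 4$, $L'$ is irreducible of the same dimension.

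I would then split on the image of $\bar{\pi}$, which is irreducible in $\bb{P}^1$ and so is either a point or all of $\bb{P}^1$. If $\bar{\pi}(L') = \{t_0\}$, then $L' \subseteq L(Y_{t_0})$ and density with irreducibility of $L(X)$ give $L(Y_{t_0}) = L(X)$; every line in $X$ then lies in $H_{t_0}$, and since $X$ is covered by its lines, $X \subseteq H_{t_0}$, contradicting non-degeneracy. If $\bar{\pi}(L') = \bb{P}^1$, then $\bar{\pi}$ is dominant and the generic fiber has dimension $2n - 5$; this fiber lies in $L(Y_t)$. Because ``line contained in a fixed hyperplane'' is a codimension-$2$ Schubert condition in $\Gr{2}{n+3}$, for generic $t$ the intersection $L(Y_t) = L(X) \cap \{l \subseteq H_t\}$ has dimension $2n - 6$, contradicting $2n - 5 \leq 2n - 6$.

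The main obstacle is the transversality assertion in the second case, namely that $\dim L(Y_t) = 2n - 6$ for generic $t$ rather than jumping up. This is resolved by observing that the base locus of the pencil of Schubert conditions ``$l \subseteq H_t$'' on $\Gr{2}{n+3}$ is the Grassmannian of lines contained in $\bb{T}_p X$, and its intersection with $L(X)$ is exactly $L(S_p(X))$, a proper closed subvariety of $L(X)$. Thus the pencil has no component of $L(X)$ in its base locus, so a general member cuts $L(X)$ properly in codimension $2$, delivering the needed dimension bound.
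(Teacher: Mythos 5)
Your reduction is correct and clean: since $S_p(X) = \bb{T}_p X \cap X$ and $D_l(X)$ is the union of the lines of $X$ meeting $l$, the condition $p \notin D_l(X)$ is indeed equivalent to $l \cap \bb{T}_p X = \emptyset$, so it suffices to exhibit a line of $X$ disjoint from the codimension-two linear space $\bb{T}_p X$. This is a genuinely different route from the paper's, which handles $n \ge 4$ by cutting with a general $\bb{P}^5$ through the given point and settles $n = 3$ by an elementary argument using only the finiteness of $L_x(X)$. However, your Case 2 contains a genuine gap. The inference ``the base locus of the pencil of Schubert conditions meets $L(X)$ in a proper closed subset, hence a general member cuts $L(X)$ properly in codimension $2$'' is the Bertini-type statement for pencils of \emph{divisors}; it fails for a one-parameter family of codimension-two cycles. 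Concretely, set $I = \{(l, t) : l \in L(X),\ l \subseteq H_t\} \subseteq L(X) \times \bb{P}^1$. Under your standing hypothesis that every line of $X$ meets $\bb{T}_p X$, the projection $I \to L(X)$ is surjective and is bijective over $L' = L(X) \setminus L(S_p(X))$, so $I$ contains an irreducible component of dimension $2n - 4$; if that component dominates $\bb{P}^1$ (your Case 2), its general fiber has dimension $2n - 5$, and that fiber is contained in the fiber of $I$ over $t$, which is exactly $L(Y_t)$. Thus $\dim L(Y_t) \ge 2n - 5$ for general $t$: the equality $\dim L(Y_t) = 2n - 6$ that you need is precisely what the contradiction hypothesis denies, so it is refuted by, rather than contrasted with, your fiber-dimension count, and no contradiction is obtained. (Note also that $Y_t = X \cap H_t$ is always singular at $p$ because $H_t \supseteq \bb{T}_p X$, so the smooth expected-dimension count for its lines is unavailable; nor does Kleiman transversality apply, since the $H_t$ form a pencil rather than a general translate.)

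To repair the argument you would need a genuine geometric input showing that $L(X)$ is not contained in the divisorial Schubert cycle of lines meeting $\bb{T}_p X$ --- for instance a dimension count of the lines meeting $S_p(X)$, which requires uniform control of $\dim S_p(X)$ and of $\dim L_q(X)$ for all $q \in S_p(X)$; this is comparable in difficulty to the statement itself. Separately, you invoke without proof that $L(X)$ is irreducible of dimension $2n - 4$. This is true (it follows from Reid's thesis together with connectedness of the Fano scheme of lines), but it is a nontrivial external input that the paper's proof avoids entirely; at minimum it would need a citation.
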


\begin{proof}
Fix a point $x \in X$. Our goal is to show that there exists a line $l$ in $X$ such that $x \not\in D_l(X)$.

First assume that $n = 3$. In this case $\dim S_x(X) = 1$, so there are finitely many points in $L_x(X)$, name them $h_1, \dots, h_m$. Take a point $y \in X$ which does not belong to the divisor $D_{h_1}(X) \cup \ldots \cup D_{h_m}(X)$. Then for any line $l$ in $X$ which contains $y$ we have $x \not\in D_l(X)$. Indeed, if $x \in D_l(X)$, then there is a line $k$ in $X$ such that $x \in k$ and $k$ meets $l$. It follows that $k = h_i$ for some $i \in \{1, \dots, m\}$ and $y \in D_{h_i}(X)$, a contradiction.

Now assume that $n \ge 4$. Take a general $5$-dimensional subspace $W \subseteq \bb{P}^{n + 2}$ containing~$x$. Then $Y = W \cap X$ is a smooth complete intersection of two quadrics in $W \cong \bb{P}^5$, hence there exists a line $l$ in $Y$ such that $x \not\in D_l(Y)$. Since $D_l(Y) = D_l(X) \cap W$, we conclude that $x \not\in D_l(X)$.
\end{proof}

\begin{theo} \label{FACX.th}
Let $X$ be a smooth complete intersection of two quadrics in $\bb{P}^{n + 2}$, $n \ge 3$. Then every affine cone over $X$ is flexible.
\end{theo}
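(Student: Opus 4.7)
The plan is to assemble a transversal covering of $X$ by $H$-polar principal cylinders and then to invoke Theorem~\ref{FAC.th}. The key geometric input is that for each line $l \subseteq X$, the affine open $U_l(X) = X \setminus D_l(X)$ is isomorphic, via the projection from $l$ recalled at the beginning of this section, to the complement of a quadric in $\bb{P}^n$ of rank $r \in \{3, 4\}$---precisely the situation for which Proposition~\ref{TCCQ.pr} already furnishes a transversal covering by principal cylinders.

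Concretely, for each $l \in L(X)$ I would pull back the transversal covering of $\bb{P}^n \setminus \phi(E)$ given by Proposition~\ref{TCCQ.pr} along the isomorphism $U_l(X) \cong \bb{P}^n \setminus \phi(E)$, obtaining a transversal covering $\cal{W}_l$ of $U_l(X)$ by principal cylinders. Lemma~\ref{CX.le} then ensures that the charts $\{U_l(X)\}_{l \in L(X)}$ cover $X$, and Lemma~\ref{LTC.le}.(1) assembles the local coverings into a single transversal cylindrical covering $\cal{W} = \bigcup_{l \in L(X)} \cal{W}_l$ of $X$.

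Finally, I would verify the hypotheses of Theorem~\ref{FAC.th}. Since $\Pic(X) \cong \bb{Z}$ is generated by any very ample divisor $H$, the pure-codimension-one complement $X \setminus W$ of any $W \in \cal{W}$ is supported by an effective divisor linearly equivalent to $d H$ for some $d > 0$, so $\cal{W}$ consists of $H$-polar principal cylinders. A direct inspection of the explicit constructions in Proposition~\ref{TCCQ.pr} (ultimately coming from Lemmas~\ref{CCQ.le} and~\ref{TCCC.le}) shows that every such cylinder is isomorphic to $\bb{A}^1_* \times \bb{A}^{n - 1}$, so it has trivial Picard group. Combined with the smoothness of $X$, this puts us in the first alternative of Theorem~\ref{FAC.th}, which then yields the flexibility of $\AffCone_H(X)$ for every very ample divisor $H$ on $X$. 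The real substance of the argument lies in the reduction of the first step---producing, for every point of $X$, a line $l \subseteq X$ with $x \notin D_l(X)$---which is already handled by Lemma~\ref{CX.le}; once that is in hand, the remaining argument is a routine assembly of the machinery developed in the earlier sections.
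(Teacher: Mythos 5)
Your proposal is correct and follows essentially the same route as the paper: cover $X$ by the charts $U_l(X) \cong \bb{P}^n \setminus \phi(E)$ via Lemma~\ref{CX.le}, import the transversal cylinder coverings from Proposition~\ref{TCCQ.pr}, glue them with Lemma~\ref{LTC.le}.(1), and conclude by Theorem~\ref{FAC.th} using $\Pic(X) \cong \bb{Z}$. If anything, you are slightly more explicit than the paper in verifying the hypotheses of Theorem~\ref{FAC.th} (the $H$-polarity and the triviality of $\Pic$ of the cylinders), which the paper only addresses in the remark following the proof.
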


\begin{proof}
Since $\Pic(X) \cong \bb{Z}$, by~Theorem~\ref{FAC.th} it suffices to show that there exists a transversal covering of $X$ by cylinders. By Lemma~\ref{CX.le}, there exists a covering $\cal{U} = \{U_i\}_{i \in I}$ of $X$ by open subsets, where $U_i$ is isomorphic to the complement of a quadric of rank at least three in $\bb{P}^n$ for all $i \in I$. By Proposition~\ref{TCCQ.pr} each $U_i$ admits a transversal covering by cylinders. The assertion now follows from Lemma~\ref{LTC.le}.(1).
\end{proof}

Let $U \subseteq X$ be a cylinder in the transversal covering of $X$, constructed in the proof of Theorem~\ref{FACX.th}. We see that the complement $X \setminus U$ is the union of two prime divisors: a hyperplane section and a divisor of the form $D_l(X)$. Therefore, since $D_l(X)$ is cut out by a quadric, we have $X \setminus U = \Supp D$, where $D \sim 3 H$ and $H$ is a hyperplane section of $X$.

\begin{coro} \label{EACX.co}
Consider an affine space $\bb{A}^n$, $n \ge 6$ with coordinates $x_1, \dots, x_n$. Let $X \subseteq \bb{A}^n$ be an affine variety given by equations
$$
x_1^2 + \ldots + x_n^2 = 0 \insertText{and}
\lambda_1 x_1^2 + \ldots + \lambda_n x_n^2 = 0,
$$
where $\lambda_1, \dots, \lambda_n \in \bb{K}$ are pairwise distinct. Then $X$ is flexible.
\end{coro}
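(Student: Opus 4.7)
The plan is to recognize $X$ as the affine cone over a projective smooth complete intersection of two quadrics and invoke Theorem~\ref{FACX.th}. Let $\bar{X} \subseteq \bb{P}^{n - 1}$ be the projective variety defined by the homogenizations of the given equations (which are already homogeneous of degree $2$). Then $X \subseteq \bb{A}^n$ is precisely $\AffCone_H(\bar{X})$, where $H$ is the hyperplane section. Setting $m = n - 3$, we have $m \ge 3$ and $\bar{X} \subseteq \bb{P}^{m + 2}$, so Theorem~\ref{FACX.th} applies to $\bar{X}$ as soon as we verify that $\bar{X}$ is a smooth complete intersection of two quadrics.

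The two defining quadratic forms $f = \sum_{i = 1}^n x_i^2$ and $g = \sum_{i = 1}^n \lambda_i x_i^2$ are linearly independent because the $\lambda_i$ are pairwise distinct (in particular not all equal). Hence the subscheme $\bar{X}$ they cut out has codimension at most $2$; after I check smoothness of the expected dimension $n - 3$, it will follow that $\bar{X}$ is a complete intersection. To check smoothness I consider the Jacobian
$$
J = \begin{pmatrix}
2 x_1 & 2 x_2 & \cdots & 2 x_n
\\
2 \lambda_1 x_1 & 2 \lambda_2 x_2 & \cdots & 2 \lambda_n x_n
\end{pmatrix}.
$$
Its $2 \times 2$ minor on columns $i$ and $j$ equals $4 (\lambda_j - \lambda_i) x_i x_j$. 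At a singular point all these minors vanish; since $\lambda_i \ne \lambda_j$ for $i \ne j$, this forces $x_i x_j = 0$ for all $i \ne j$, so at most one coordinate, say $x_k$, is nonzero. Substituting into $f = 0$ gives $x_k^2 = 0$, hence $x_k = 0$, contradicting the fact that a point of $\bb{P}^{n - 1}$ has at least one nonzero coordinate. Therefore $\bar{X}$ is smooth.

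With $\bar{X}$ verified as a smooth complete intersection of two quadrics in $\bb{P}^{m + 2}$ with $m = n - 3 \ge 3$, Theorem~\ref{FACX.th} guarantees that every affine cone over $\bar{X}$ is flexible. In particular $X = \AffCone_H(\bar{X})$ is flexible, which is what we wanted. There is no real obstacle: the only computation is the Jacobian check above, which is immediate from the assumption that the $\lambda_i$ are pairwise distinct.
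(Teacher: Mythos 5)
Your proposal is correct and follows essentially the same route as the paper: identify $X$ as the affine cone over the smooth complete intersection of two quadrics in $\bb{P}^{n-1}$ and apply Theorem~\ref{FACX.th}. The only difference is that where the paper cites \cite[Proposition~2.1]{Re72} for the link between the diagonal normal form with pairwise distinct $\lambda_i$ and smoothness, you verify smoothness directly by the (correct) Jacobian computation, which makes the argument self-contained.
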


\begin{proof}
Use \cite[Proposition~2.1]{Re72} and Theorem~\ref{FACX.th}.
\end{proof}

Note that the variety $X$ in Corollary~\ref{EACX.co} is normal, since a smooth complete intersection of two quadrics is projectively normal.

It remains an interesting question whether flexibility holds for affine cones over complete intersections of three quadrics, or for mildly singular degenerations of the varieties considered above.


{}


\end{document}